 \newcounter{theorem}
 \newcounter{lemma}
 \newcounter{corollary}
 \newtheorem{Corollary}[corollary]{Corollary}
 \newcounter{proposition}
 \newtheorem{Proposition}[proposition]{Proposition}
 \newcounter{characterization}
 \newcounter{property}
 \newcounter{problem}
 \newcounter{example}
 \newcounter{examplesanddefinitions}
 \newcounter{remark}
 \newtheorem{Remark}[remark]{Remark}
 \newcounter{definition}
 \newtheorem{Definition}[definition]{Definition}
 \newcounter{hypothesis}
 \newcounter{notation}
\newcommand{\cmark}{\ding{51}}
\newcommand{\xmark}{\ding{55}}
\newcommand{\idxsize}{\scriptstyle}
\providecommand{\keywords}[1]
{
  \small	
  \textbf{\textit{Keywords---}} #1
}
\providecommand{\funding}[1]
{
  \small	
  \textbf{\textit{Funding---}} #1
}
\providecommand{\acknowledgments}[1]
{
  \small	
  \textbf{\textit{Acknowledgments---}} #1
}
\title{Symplectic Model Order Reduction\\ with Non-Orthonormal Bases}
\author{Patrick Buchfink $^{1,}$\footnote{\{patrick.buchfink,haasdonk\}@ians.uni-stuttgart.de}, Ashish Bhatt$^{2,}$\footnote{ashishbhatt@iitism.ac.in}~and Bernard Haasdonk $^{1,*}$ }
\date{\parbox{\linewidth}{\centering%
 $ ^{1}$ Institute of Applied Analysis and Numerical Simulation,\\ University of Stuttgart, 70569 Stuttgart, Germany \endgraf\bigskip
 $ ^{2}$ Indian Institute of Technology (ISM), Dhanbad, Jharkhand 826004, India \endgraf\bigskip
  \today\endgraf}}
\newcommand{\N}{\ensuremath{\mathbb{N}}}
\newcommand{\R}{\ensuremath{\mathbb{R}}}
\newcommand{\Rpos}{\ensuremath{\mathbb{R}_{>0}}}
\newcommand{\Cn}{\ensuremath{\mathbb{C}}}
\newcommand{\imagUnit}{\ensuremath{{\mathrm{i}}}}
\newcommand{\txtfrac}[2]{\ensuremath{{#1}/{#2}}}
\newcommand{\lb}{\left(}
\newcommand{\rb}{\right)}
\newcommand{\lsb}{\left[}
\newcommand{\rsb}{\right]}
\newcommand{\lcb}{\left\{}
\newcommand{\rcb}{\right\}}
\DeclareMathOperator*{\argmax}{\operatornamewithlimits{argmax}}
\let\Re\relax
\DeclareMathOperator{\Re}{Re}
\newcommand{\Reb}[1]{\ensuremath{\Re\lb{#1}\rb}}
\let\Im\relax
\DeclareMathOperator{\Im}{Im}
\newcommand{\Imb}[1]{\ensuremath{\Im\lb{#1}\rb}}
\DeclareMathOperator*{\rank}{rank}
\DeclareMathOperator*{\img}{img}
\DeclareMathOperator*{\trace}{trace}
\DeclareMathOperator*{\colspan}{colspan}
\DeclareMathOperator*{\diag}{diag}
\newcommand{\colspanb}[1]{\ensuremath{\colspan{\lb #1 \rb}}}
\newcommand{\traceb}[1]{\trace\lb{#1}\rb}
\newcommand{\abs}[1]{\ensuremath{\left|{#1}\right|}}
\newcommand{\minimize}[1]{\underset{#1}{\operatorname{minimize}}}
\newcommand{\fb}{\ensuremath{\bm{b}}}
\newcommand{\fe}{\ensuremath{\bm{e}}}
\newcommand{\ff}{\ensuremath{\bm{f}}}
\newcommand{\fg}{\ensuremath{\bm{g}}}
\newcommand{\fh}{\ensuremath{\bm{h}}}
\newcommand{\fp}{\ensuremath{\bm{p}}}
\newcommand{\fq}{\ensuremath{\bm{q}}}
\newcommand{\fr}{\ensuremath{\bm{r}}}
\newcommand{\fs}{\ensuremath{\bm{s}}}
\newcommand{\ft}{\ensuremath{\bm{t}}}
\newcommand{\fu}{\ensuremath{\bm{u}}}
\newcommand{\fv}{\ensuremath{\bm{v}}}
\newcommand{\fx}{\ensuremath{\bm{x}}}
\newcommand{\fy}{\ensuremath{\bm{y}}}
\newcommand{\fA}{\ensuremath{\bm{A}}}
\newcommand{\fB}{\ensuremath{\bm{B}}}
\newcommand{\fC}{\ensuremath{\bm{C}}}
\newcommand{\fD}{\ensuremath{\bm{D}}}
\newcommand{\fE}{\ensuremath{\bm{E}}}
\newcommand{\fF}{\ensuremath{\bm{F}}}
\newcommand{\fG}{\ensuremath{\bm{G}}}
\newcommand{\fH}{\ensuremath{\bm{H}}}
\newcommand{\fI}{\ensuremath{\bm{I}}}
\newcommand{\fK}{\ensuremath{\bm{K}}}
\newcommand{\fM}{\ensuremath{\bm{M}}}
\newcommand{\fN}{\ensuremath{\bm{N}}}
\newcommand{\fP}{\ensuremath{\bm{P}}}
\newcommand{\fQ}{\ensuremath{\bm{Q}}}
\newcommand{\fR}{\ensuremath{\bm{R}}}
\newcommand{\fS}{\ensuremath{\bm{S}}}
\newcommand{\fU}{\ensuremath{\bm{U}}}
\newcommand{\fV}{\ensuremath{\bm{V}}}
\newcommand{\fW}{\ensuremath{\bm{W}}}
\newcommand{\fX}{\ensuremath{\bm{X}}}
\newcommand{\fY}{\ensuremath{\bm{Y}}}
\newcommand{\fmu}{\ensuremath{\bm{\mu}}}
\newcommand{\fxi}{\ensuremath{\bm{\xi}}}
\newcommand{\fphi}{\ensuremath{\bm{\varphi}}}
\newcommand{\fSigma}{\ensuremath{\bm{\Sigma}}}
\newcommand{\fPhi}{\ensuremath{\bm{\varPhi}}}
\newcommand{\fPsi}{\ensuremath{\bm{\Psi}}}
\newcommand{\fzero}{\ensuremath{\bm{0}}}
\newcommand{\rT}[1]{\ensuremath{#1^{\textsf{T}}}}
\newcommand{\rTb}[1]{\ensuremath{\rT{\lb#1\rb}}}
\newcommand{\rTsb}[1]{\ensuremath{\rT{\lsb#1\rsb}}}
\newcommand{\cT}[1]{\ensuremath{#1^{\ast}}}
\newcommand{\cTb}[1]{\ensuremath{\cT{\lb{}#1\rb}}}
\newcommand{\si}[1]{\ensuremath{#1^{+}}}
\newcommand{\inv}[1]{\ensuremath{{#1}^{\textsf{-1}}}}
\newcommand{\invb}[1]{\ensuremath{\inv{\lb{#1}\rb}}}
\newcommand{\grad}[1][]{\ensuremath{\nabla_{#1}}}
\let\div\relax
\DeclareMathOperator{\div}{div}
\newcommand{\divb}[2][]{\ensuremath{\div_{#1}\lb{#2}\rb}}
\newcommand{\sodeldel}[1]{\ensuremath{\frac{\partial^2}{\partial^2 #1}}} 
\newcommand{\dd}[1]{\ensuremath{\frac{\mathrm{d}}{\mathrm{d}#1}}}
\newcommand{\ddfy}{\ensuremath{\dd{\fy}}}
\newcommand{\ddt}{\ensuremath{\dd{t}}}
\newcommand{\J}{\ensuremath{\mathbb{J}}}
\newcommand{\Jtwo}[1]{\ensuremath{\J_{2#1}}}
\newcommand{\TJtwo}[1]{\ensuremath{\rT{\J}_{2#1}}}
\newcommand{\Jtn}{\ensuremath{\Jtwo{n}}}
\newcommand{\TJtn}{\ensuremath{\TJtwo{n}}}
\newcommand{\Jtk}{\ensuremath{\Jtwo{k}}}
\newcommand{\TJtk}{\ensuremath{\TJtwo{k}}}
\newcommand{\I}[1]{\ensuremath{\fI_{#1}}}
\newcommand{\Z}[1]{\ensuremath{\fzero}_{#1}}
\newcommand{\norm}[1]{\left\lVert{#1}\right\rVert}
\newcommand{\wnorm}[2]{\ensuremath{\norm{#2}_{#1}}} 
\newcommand{\Fnorm}[1]{\wnorm{\mathrm{F}}{#1}}
\newcommand{\tnorm}[1]{\wnorm{\rm{2}}{#1}} 
\newcommand{\ip}[2]{\left\langle #1,\; #2 \right\rangle}
\newcommand{\tInit}{\ensuremath{t_{\mathrm{0}}}}
\newcommand{\tEnd}{\ensuremath{t_{\mathrm{end}}}}
\newcommand{\ftInterval}{\ensuremath{[\tInit, \tEnd]}} 
\newcommand{\fxInit}{\ensuremath{\fx_{\mathrm{0}}}}
\newcommand{\paramSet}{\mathcal{P}}
\newcommand{\ns}{\ensuremath{n_{\mathrm{s}}}}
\newcommand{\xs}{\ensuremath{\fx^{\mathrm{s}}}}
\newcommand{\Xs}{\ensuremath{\fX_{\mathrm{s}}}}
\newcommand{\reduce}[1]{\ensuremath{#1_{\mathrm{r}}}}
\newcommand{\fxr}{\ensuremath{\reduce{\fx}}}
\newcommand{\reconstruced}[1]{\ensuremath{{#1}_{\mathrm{rc}}}}
\newcommand{\fxrc}{\ensuremath{\reconstruced{\fx}}}
\newcommand{\redSpace}{\ensuremath{\mathcal{V}}}
\newcommand{\fAr}{\ensuremath{\reduce{\fA}}}
\newcommand{\fbr}{\ensuremath{\reduce{\fb}}}
\newcommand{\solManifold}{\ensuremath{\mathcal{S}}}
\newcommand{\aprxSolManifold}{\ensuremath{\widehat{\solManifold}_{\fV\fW}}}
\newcommand{\gradx}{\ensuremath{\nabla_{\fx}}}
\newcommand{\symplForm}[1][]{\ensuremath{\omega_{#1}}}
\newcommand{\symplFormb}[3][]{\ensuremath{\symplForm[#1]\lb #2,\; #3 \rb}}
\newcommand{\Jtns}{\ensuremath{\Jtwo{\ns}}}
\newcommand{\TJtns}{\ensuremath{\TJtwo{\ns}}}
\newcommand{\Ham}{\ensuremath{\mathcal{H}}}
\newcommand{\XHam}{\ensuremath{\fX_{\Ham}}}
\newcommand{\Hamr}{\ensuremath{\reduce{\Ham}}}
\newcommand{\gradxr}{\ensuremath{\nabla_{\fxr}}}
\newcommand{\fHr}{\ensuremath{\reduce{\fH}}}
\newcommand{\fhr}{\ensuremath{\reduce{\fh}}}
\newcommand{\Ys}{\ensuremath{\fY_{\mathrm{s}}}}
\newcommand{\fVE}{\ensuremath{\fV_{\fE}}}
\newcommand{\extended}[1]{#1^{\mathrm{e}}}
\newcommand{\Hame}{\ensuremath{\extended{\Ham}}}
\newcommand{\fxe}{\ensuremath{\extended{\fx}}}
\newcommand{\qe}{\ensuremath{\extended{q}}}
\newcommand{\pe}{\ensuremath{\extended{p}}}
\newcommand{\fxer}{\ensuremath{\reduce{\fxe}}}
\newcommand{\Hamer}{\ensuremath{\reduce{\Hame}}}
\newcommand{\PSD}{\texttt{PSD}}
\newcommand{\Vspace}{\ensuremath{\mathbb{V}}}
\newcommand{\symplDomain}{\ensuremath{U}}
\newcommand{\flow}[1][t]{\ensuremath{\fphi_{#1}}}
\newcommand{\errHam}{\ensuremath{e_{\Ham}}}
\newcommand{\fPVE}{\ensuremath{\fP_{\fVE}}}
\newcommand{\fPVEoc}{\ensuremath{\fPVE^{\perp}}}
\newcommand{\Cs}{\ensuremath{\fC_{\textrm{s}}}}
\newcommand{\fqs}{\ensuremath{\fq^{\textrm{s}}}}
\newcommand{\fps}{\ensuremath{\fp^{\textrm{s}}}}
\newcommand{\fUC}{\ensuremath{\fU_{\Cs}}}
\newcommand{\rhoz}{\ensuremath{\rho_{0}}}
\newcommand{\lamelambda}{\ensuremath{\lambda_{\textrm{L}}}}
\newcommand{\lamemu}{\ensuremath{\mu_{\textrm{L}}}}
\newcommand{\domain}{\ensuremath{\varOmega}}
\newcommand{\boundary}{\ensuremath{\varGamma}}
\newcommand{\boundaryDirichlet}{\ensuremath{\boundary_{\fu}}}
\newcommand{\boundaryNeumann}{\ensuremath{\varGamma_{\ft}}}
\newcommand{\nondim}[1]{#1^{\textrm{c}}}
\newcommand{\nt}{\ensuremath{n_{\textrm{t}}}}
\newcommand{\err}{\ensuremath{e}}
\newcommand{\meanErr}{\ensuremath{\overline{\err}}}
\newcommand{\orthoMeasure}{\ensuremath{o_{\fV}}}
\newcommand{\symMeasure}{\ensuremath{s_{\fV}}}
\newcommand{\errProj}{\ensuremath{e_{l_{2}}}}
\newcommand{\fVPOD}{\ensuremath{\fV_{\textrm{POD}}}}
\newcommand{\CT}[1]{{#1}_{\textrm{CT}}}
\newcommand{\cSVD}[1]{{#1}_{\textrm{cSVD}}}
\newcommand{\fSigmaS}{\ensuremath{\fSigma_{\mathrm{s}}}}
\newcommand{\sigmaS}{\ensuremath{\sigma^{\mathrm{s}}}}
\newcommand{\idxSetPSD}{\ensuremath{\mathcal{I}_{\textrm{PSD}}}}
\newcommand{\idxSetPSDtk}{\ensuremath{\idxSetPSD^{2k}}}
\newcommand{\wsigmaS}{\ensuremath{w^\mathrm{s}}}
\newcommand{\prcCSVD}{\texttt{cSVD}}
\newcommand{\prcSVD}{\texttt{SVD}}
\newcommand{\usim}{\ensuremath{\textrm{m}}} 
\newcommand{\usis}{\ensuremath{\textrm{s}}} 
\newcommand{\usiN}{\ensuremath{\textrm{N}}} 
\newcommand{\usikg}{\ensuremath{\textrm{kg}}} 
\newcommand{\fRs}{\ensuremath{\widetilde{\fR}}}
\newcommand{\HamRel}{\ensuremath{\Ham_{\mathrm{rel}}}}
\newcommand{\infnorm}[1]{\ensuremath{\norm{#1}_{\infty}}}
\definecolor{uniSblue}{RGB}{0,65,145}
\definecolor{uniSlightblue}{RGB}{0,190,255}
\definecolor{uniSgray}{RGB}{62, 68, 76}
\definecolor{uniSyellow}{RGB}{236, 178, 32}
\pgfplotsset{%
  colormap={unis}{color=(uniSblue) color=(uniSlightblue)},%
  colormap={unisgray}{color=(uniSgray) color=(uniSblue)}%
}%
\definecolor{uniSred}{RGB}{237, 28, 36}
\definecolor{uniSgreen}{RGB}{0, 166, 81}
\colorlet{uniSlightgray}{uniSgray!30}
\colorlet{color1}{uniSgray}
\colorlet{color2}{uniSblue}
\colorlet{color3}{uniSlightblue}
\colorlet{color4}{uniSyellow}
\colorlet{color5}{uniSred}
\colorlet{color6}{uniSgreen}
\pgfplotsset{cycle list = {%
	color1,%
	color2,%
	color3,%
	color4,%
	color5,%
	color6,%
	},
}
\tikzset{%
    draw=uniSgray
}
\pgfplotsset{
  every axis/.append style={
	  draw=uniSgray,
  },
}
\pgfplotsset{
  every axis plot/.append style={line width=1pt,},
  every axis/.append style={
    ymajorgrids,
    xmajorgrids,
    grid style={dashed, lightgray,semithick},
   	axis line style = semithick,
   	every tick/.style={semithick,},
  },
}
\pgfplotsset{
  /pgfplots/group/every plot/.append style = {
    height = 5cm, width = 5cm,
    ylabel near ticks,
  },
  /pgfplots/group/horizontal sep = {2.5cm},
}
\pgfplotsset{
    legend image with text/.style={
        legend image code/.code={%
            \node[anchor=center] at (0.3cm,0cm) {#1};
        }
    },
}
\pgfplotsset{
	/pgfplots/contour legend/.style={
		legend image code/.code={
			\draw[color={uniSlightblue}, line width=0.8pt] (0.3cm,0.05cm) circle (0.2cm);
			\draw[color={uniSlightblue!50!uniSblue}, line width=0.8pt] (0.3cm,0.05cm) circle (0.125cm);
			\draw[color={uniSblue}, line width=0.8pt] (0.3cm,0.05cm) circle (0.05cm);
		},
	},
}
\newcommand{\save}[3]{\expandafter\edef\csname #1#2\endcsname{#3}}
\newcommand{\load}[2]{\expandafter\csname #1#2\endcsname}
\begin{document}

\maketitle

\begin{abstract}
Parametric high-fidelity simulations are of interest for a wide range of applications. But the restriction of computational resources renders such models to be inapplicable in a real-time context or in multi-query scenarios. Model order reduction (MOR) is used to tackle this issue. Recently, MOR is extended to preserve specific structures of the model throughout the reduction, e.g.\ structure-preserving MOR for Hamiltonian systems. This is referred to as symplectic MOR. It is based on the classical projection-based MOR and uses a symplectic reduced order basis (ROB). Such a ROB can be derived in a data-driven manner with the Proper Symplectic Decomposition~(PSD) in the form of a minimization problem. Due to the strong nonlinearity of the minimization problem, it is unclear how to efficiently find a global optimum. In our paper, we show that current solution procedures almost exclusively yield suboptimal solutions by restricting to orthonormal ROBs. As new methodological contribution, we propose a new method which eliminates this restriction by generating non-orthonormal ROBs. In the numerical experiments, we examine the different techniques for a classical linear elasticity problem and observe that the non-orthonormal technique proposed in this paper shows superior results with respect to the error introduced by the reduction.
\end{abstract}\hspace{10pt}

\keywords{Symplectic model order reduction, proper symplectic decomposition (PSD), structure preservation of symplecticity, Hamiltonian system}

\section{Introduction}

Simulations enable researchers of all fields to run virtual experiments that are too expensive or impossible to be carried out in the real world. In many contexts, high-fidelity models are indispensable to represent the simulated process accurately. These high-fidelity simulations typically come with the burden of large computational cost such that an application in real-time or an evaluation for many different parameters is impossible respecting the given restrictions of computational resources at hand. Model order reduction (MOR) techniques can be used to reduce the computational cost of evaluations of the high-fidelity model by approximating these with a surrogate reduced-order model (ROM) \cite{LuminyBook2017}.

One class of high-fidelity models are systems of ordinary differential equations (ODEs) with a high order, i.e.\ a high dimension in the unknown variable. Such models typically arise from fine discretizations of time-dependent partial differential equations (PDEs). Since each point in the discretization requires one or multiple unknowns, fine discretizations with many discretization points yield a system of ODEs with a high order. In some cases, the ODE system takes the form of a finite-dimensional Hamiltonian system. Examples are linear elastic models \cite{Buchfink2018} or gyro systems \cite{Xu2005}.

Symplectic MOR \cite{Peng2016} allows to derive a ROM for high-dimensional Hamiltonian systems by lowering the order of the system while maintaining the Hamiltonian structure. Thus, it is also referred to as structure-preserving MOR for Hamiltonian systems \cite{Maboudi2017}. Technically speaking, a Petrov--Galerkin projection is used in combination with a symplectic reduced-order basis (ROB).

For a data-driven generation of the ROB, the conventional methods e.g.\ the Proper Orthogonal Decomposition (POD) \cite{LuminyBook2017} are not suited since they do not necessarily compute a symplectic ROB. To this end, the referenced works introduce the Proper Symplectic Decomposition (PSD) which is a data-driven basis generation technique for symplectic ROBs. Due to the high nonlineariy of the optimization problem, an efficient solution strategy is yet unknown for the PSD. The existing PSD methods (Cotangent Lift, Complex SVD, a nonlinear programming approach \cite{Peng2016} and a greedy procedure introduced in \cite{Maboudi2017}) each restrict to a specific subset of symplectic ROBs from which they select optimal solutions which might be globally suboptimal.

The present paper classifies the existing symplectic basis generation techniques in two classes of methods which either generate orthonormal or non-orthonormal bases. To this end, we show that the existing basis generation techniques for symplectic bases almost exclusively restrict to orthonormal bases. Furthermore, we prove that Complex SVD is the optimal solution of the PSD on the set of orthonormal, symplectic bases. During the proof, an alternative formulation of the Complex SVD for symplectic matrices is introduced. To leave the class of orthonormal, symplectic bases, we propose a new basis generation technique, namely the PSD SVD-like decomposition. It is based on an SVD-like decomposition of arbitrary matrices $\fB \in \R^{n \times 2m}$ introduced in \cite{Xu2003}.

This paper is organized in the following way: \Cref{sec:MORAutoHamSys} is devoted to the structure-preserving MOR for autonomous and non-autonomous, parametric Hamiltonian systems and thus, introduces symplectic geometry, Hamiltonian systems and symplectic MOR successively. The data-driven generation of a symplectic ROB with PSD is discussed in \Cref{subsec:PSD}. The numerical results are presented and elaborated in \Cref{sec:NumRes} exemplified by a Lam\'e--Navier type elasticity model which we introduce at the beginning of that section together with a short comment on the software that is used for the experiments. The paper is summarized and concluded in \cref{sec:Conclusion}.

\section{Symplectic model reduction}
\label{sec:MORAutoHamSys}
Symplectic MOR for autonomous Hamiltonian systems is introduced in \cite{Peng2016}. We repeat the essentials for the sake of completeness and to provide a deeper understanding of the methods used. In the following $\fmu \in \paramSet \subset \R^p$ describe $p \in \N$ parameters of the system from the parameter set $\paramSet$. We might skip the explicit dependence on the parameter vector $\fmu$ if it is not relevant in this specific context.

\subsection{Symplectic geometry in finite dimensions}
\label{subsec:SymplGeo}
\begin{Definition}[Symplectic form over $\R$]
  Let $\Vspace$ be a finite-dimensional vector space over $\R$. We consider a skew-symmetric and non-degenerate bilinear form $\symplForm: \Vspace \times \Vspace \rightarrow \R$ , i.e.\ for all $\fv_1,\fv_2 \in \Vspace$, it holds
\begin{align*}
	&\symplFormb{\fv_1}{\fv_2} = -\symplFormb{\fv_2}{\fv_1}&
&\quad\text{and}\quad&
	&\symplFormb{\fv_2}{\fv_3} = 0 \quad \forall \fv_3 \in \Vspace
\implies \fv_3 = \fzero.
\end{align*}
The bilinear form $\symplForm$ is called symplectic form on $\Vspace$ and the pair $(\Vspace, \symplForm)$ is called symplectic vector space.
\end{Definition}

It can be shown that $\Vspace$ is necessarily of even dimension \cite{daSilva2008}. Thus, $\Vspace$ is isomorphic to $\R^{2n}$ which is why we restrict to $\Vspace = \R^{2n}$ and write $\symplForm[2n]$ instead of $\symplForm$ in the following. In context of the theory of Hamiltonians, $\R^{2n}$ refers to the phase space which consists, in the context of classical mechanics, of position states $\fq = \rTsb{q_1, \dots, q_n} \in \R^n$ of the configuration space and momentum states $\fp = \rTsb{p_1, \dots, p_n} \in \R^n$ which form together the state $\fx = \rTsb{q_1, \dots, q_n, p_1, \dots, p_n} \in \R^{2n}$.

It is guaranteed \cite{daSilva2008} that there exists a basis $\lcb \fe_1, \dots, \fe_n, \ff_1, \dots, \ff_n \rcb \subset \R^{2n}$ such that the symplectic form takes the canonical structure
\begin{align} \label{eq:CanonicalSymplForm}
	&\symplFormb[2n]{\fv_1}{\fv_2} = \rT\fv_1 \Jtn \fv_2
\quad \forall \fv_1, \fv_2 \in \R^{2n}, \qquad&
&\Jtn := 
\begin{bmatrix}
	\Z{n}  & \I{n} \\
	-\I{n} & \Z{n}
\end{bmatrix},
\end{align}
where $\I{n} \in \R^{n \times n}$ is the identity matrix, $\Z{n} \in \R^{n \times n}$ is the matrix of all zeros and $\Jtn$ is called Poisson matrix. Thus, we restrict to symplectic forms of the canonical structure in the following. For the Poisson matrix, it holds for any $\fv \in \R^{2n}$
\begin{align}\label{eq:StructMat}
	&\Jtn \TJtn = \I{2n},\quad&
&\Jtn \Jtn = \TJtn \TJtn = -\I{2n},\quad&
&\rT\fv \Jtn \fv = 0.
\end{align}
These properties are intuitively understandable as the Poisson matrix is a $2n$-dimensional, $90^\circ$ rotation matrix and the matrix $-\I{2n}$ can be interpreted as a rotation by $180^{\circ}$ in this context.

\begin{Definition}[Symplectic map]
  Let $A: \R^{2m} \rightarrow \R^{2n}$, $\fy \mapsto \fA \fy$, $\fA \in \R^{2n \times 2m}$ be a linear mapping for $n,m \in \N$ and $m\leq n$. We call $A$ a linear symplectic map and $\fA$ a symplectic matrix with respect to $\symplForm[2n]$ and $\symplForm[2m]$ if
  \begin{align} \label{eq:SymplMat}
    \rT\fA \Jtn \fA = \Jtwo{m}.
  \end{align}
  where $\symplForm[2m]$ is the canonical symplectic form on $\R^{2m}$ (and is equal to $\symplForm[2n]$ if $n=m$).
  
  Let $\symplDomain \subset \R^{2m}$ be an open set and $\fg:\symplDomain \rightarrow \R^{2n}$ a differentiable map on $\symplDomain$. We call $\fg$ a symplectic map if the Jacobian matrix $\ddfy\fg(\fy) \in \R^{2n \times 2m}$ is a symplectic matrix for every $\fy \in \symplDomain$.
\end{Definition}

For a linear map, it is easy to check that the condition \cref{eq:SymplMat} is equivalent to the preservation of the symplectic form, i.e.\ for all $\fv_1, \fv_2 \in \R^{2m}$
\begin{align*}
  \symplFormb[2n]{ \fA \fv_1 }{ \fA \fv_2 }
= \rT\fv_1 \rT\fA \Jtn \fA \fv_2
= \rT\fv_1 \Jtwo{m} \fv_2
= \symplFormb[2m]{\fv_1}{\fv_2}.
\end{align*}

Now we give the definition of the so-called symplectic inverse which will be used in \Cref{subsec:SymplMOR}.

\begin{Definition}[Symplectic inverse]
	For each symplectic matrix $\fA \in \R^{2n \times 2m}$, we define the symplectic inverse
	\begin{align}\label{eq:SymplInv}
		\si\fA = \TJtwo{m} \rT\fA \Jtn \in \R^{2m \times 2n}.
	\end{align}
\end{Definition}
The symplectic inverse $\si\fA$ exists for every symplectic matrix and it holds the following inverse relation
\begin{align*}
  \si{\fA} \fA = \TJtwo{m} \rT\fA \Jtn \fA = \TJtwo{m} \Jtwo{m} = \I{2m}.
\end{align*}

\subsection{Finite-dimensional, autonomous Hamiltonian systems}
\label{subsec:AutoHam}
To begin with, we introduce the Hamiltonian system in a finite-dimensional, autonomous setting.

\begin{Definition}[Finite-dimensional, autonomous Hamiltonian system]
  Let $\Ham:\R^{2n} \times \paramSet \rightarrow \R$ be a scalar-valued function that we require to be continuously differentiable in the first argument and which we call Hamiltonian (function). Hamilton's equation is an initial value problem with the prescribed initial data $\tInit \in \R$, $\fxInit(\fmu) \in \R^{2n}$ which describes the evolution of the solution $\fx(t, \fmu) \in \R^{2n}$ for all $t \in \ftInterval, \; \fmu \in \paramSet$ with
  \begin{align}\label{eq:AutoHamEq}
		\ddt \fx(t, \fmu) =&\; \Jtn \gradx \Ham(\fx(t, \fmu), \fmu) =: \XHam(\fx(t, \fmu), \fmu), \quad&
		\fx(\tInit, \fmu) =&\; \fxInit(\fmu)
  \end{align}
  where $\XHam(\bullet, \fmu)$ is called Hamiltonian vector field. The triple $(\Vspace, \symplForm[2n], \Ham)$ is referred to as Hamiltonian system. We denote the flow of a Hamiltonian system as the mapping $\flow: \R^{2n} \times \paramSet \rightarrow \R^{2n}$ that evolves the initial state $\fxInit(\fmu) \in \R^{2n}$ to the corresponding solution $\fx(t, \fmu;\; \tInit, \fxInit(\fmu))$ of Hamilton's equation
\begin{align*}
	\flow(\fxInit, \fmu) := \fx(t, \fmu;\; \tInit, \fxInit(\fmu)),
\end{align*}
where $\fx(t, \fmu;\; \tInit, \fxInit(\fmu))$ indicates that it is the solution with the initial data $\tInit,\fxInit(\fmu)$.
\end{Definition}

The two characteristic properties of Hamiltonian systems are (a) the preservation of the Hamiltonian function and (b) the symplecticity of the flow.

\begin{Proposition}[Preservation of the Hamiltonian]
	The flow of Hamilton's equation $\flow$ preserves the Hamiltonian function $\Ham$.
\end{Proposition}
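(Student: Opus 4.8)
The plan is to show that $\Ham$ stays constant along every solution trajectory of Hamilton's equation, which by the definition of the flow $\flow$ is exactly the assertion $\Ham(\flow(\fxInit, \fmu), \fmu) = \Ham(\fxInit, \fmu)$. First I would fix a parameter $\fmu \in \paramSet$ and consider the scalar map $t \mapsto \Ham(\fx(t,\fmu), \fmu)$ for a solution $\fx(\bullet, \fmu)$ of \cref{eq:AutoHamEq}. Since $\Ham$ is continuously differentiable in its first argument and the solution is differentiable in $t$, the chain rule applies and yields
\begin{align*}
	\ddt \Ham(\fx(t,\fmu), \fmu) = \rTb{\gradx \Ham(\fx(t,\fmu),\fmu)} \, \ddt \fx(t,\fmu).
\end{align*}

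Next I would substitute Hamilton's equation \cref{eq:AutoHamEq}, i.e.\ $\ddt \fx = \Jtn \gradx \Ham$, into the right-hand side to obtain
\begin{align*}
	\ddt \Ham(\fx(t,\fmu), \fmu) = \rTb{\gradx \Ham(\fx(t,\fmu),\fmu)} \Jtn \, \gradx \Ham(\fx(t,\fmu),\fmu).
\end{align*}
The decisive step is then the skew-symmetry property of the Poisson matrix recorded in \cref{eq:StructMat}, namely $\rT\fv \Jtn \fv = 0$ for every $\fv \in \R^{2n}$. Applying this identity with the choice $\fv = \gradx \Ham(\fx(t,\fmu),\fmu)$ makes the right-hand side vanish identically, so that $\ddt \Ham(\fx(t,\fmu), \fmu) = 0$ for all $t \in \ftInterval$. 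Hence $\Ham(\fx(\bullet,\fmu),\fmu)$ is constant in time along the trajectory, which is precisely the claimed preservation.

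I do not expect a genuine obstacle here: once the chain rule and the identity $\rT\fv \Jtn \fv = 0$ are in place, the result is a one-line computation relying solely on the skew-symmetry of $\Jtn$. The only point deserving minor care is the legitimacy of the chain rule, which is guaranteed by the standing assumption that $\Ham$ is continuously differentiable in the first argument, combined with the differentiability in $t$ of the solution underlying the flow $\flow$.
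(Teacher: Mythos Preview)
Your proposal is correct and follows essentially the same argument as the paper: differentiate $\Ham$ along the flow via the chain rule, substitute Hamilton's equation \cref{eq:AutoHamEq}, and invoke the skew-symmetry identity $\rT\fv \Jtn \fv = 0$ from \cref{eq:StructMat} to conclude that the time derivative vanishes. The paper's version is simply the condensed one-line form of what you wrote.
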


\begin{proof}
	We prove the assertion by showing that the evolution over time is constant for any $\fx \in \R^{2n}$ due to
	\begin{align*}
		\ddt \Ham(\flow(\fx))
= \rTb{\gradx\Ham(\flow(\fx))} \ddt \flow(\fx)
\stackrel{\cref{eq:AutoHamEq}}{=} \rTb{\gradx\Ham(\flow(\fx))} \Jtn \gradx\Ham(\flow(\fx))
\stackrel{\cref{eq:StructMat}}{=} 0.
	\end{align*}
\end{proof}

\begin{Proposition}[Symplecticity of the flow]
	Let the Hamiltonian function be twice continuously differentiable in the first argument. Then, the flow $\flow(\bullet, \fmu): \R^{2n} \rightarrow \R^{2n}$ of a Hamiltonian system is a symplectic map.
\end{Proposition}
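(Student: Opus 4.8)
The plan is to reduce the claim to a statement about the Jacobian of the flow with respect to the initial state. By the Definition of a symplectic map it suffices to show that, for each fixed $t \in \ftInterval$, $\fmu \in \paramSet$ and every $\fx \in \R^{2n}$, the Jacobian
$\fPhi(t) := \ddfx \flow(\fx, \fmu) \in \R^{2n \times 2n}$
satisfies $\rT{\fPhi(t)} \Jtn \fPhi(t) = \Jtn$ (here the source and target dimensions coincide, so $m=n$). At $t = \tInit$ the flow is the identity, whence $\fPhi(\tInit) = \I{2n}$, which is trivially symplectic; the strategy is then to show that the matrix-valued quantity $\rT{\fPhi(t)} \Jtn \fPhi(t)$ is in fact \emph{constant} in $t$.

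First I would establish the \emph{variational equation} for $\fPhi$. Differentiating Hamilton's equation \cref{eq:AutoHamEq} with respect to the initial state $\fx$ and interchanging the $t$- and $\fx$-derivatives, and using that the Jacobian of the Hamiltonian vector field is $\ddfx \XHam = \Jtn \gradx^2 \Ham$, yields the linear initial value problem
\begin{align*}
  \ddt \fPhi(t) = \Jtn\, \fS(t)\, \fPhi(t), \qquad \fPhi(\tInit) = \I{2n},
\end{align*}
where $\fS(t) := \gradx^2 \Ham(\flow(\fx,\fmu), \fmu)$ is the Hessian of the Hamiltonian evaluated along the trajectory. The hypothesis $\Ham \in C^2$ enters here twice: it makes $\XHam$ continuously differentiable, so that the flow depends differentiably on $\fx$ and the variational equation is valid, and, via Schwarz's theorem, it guarantees that $\fS(t)$ is symmetric, $\rT{\fS(t)} = \fS(t)$.

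The core computation is then to differentiate $\fW(t) := \rT{\fPhi(t)} \Jtn \fPhi(t)$. Applying the product rule, substituting the variational equation, and using the symmetry of $\fS$ I would obtain
\begin{align*}
  \ddt \fW(t)
  = \rT{\dot{\fPhi}} \Jtn \fPhi + \rT{\fPhi} \Jtn \dot{\fPhi}
  = \rT{\fPhi}\, \fS\, \TJtn \Jtn\, \fPhi + \rT{\fPhi}\, \Jtn \Jtn\, \fS\, \fPhi.
\end{align*}
The skew-symmetry $\TJtn = -\Jtn$ together with $\Jtn \Jtn = -\I{2n}$ from \cref{eq:StructMat} gives $\TJtn \Jtn = \I{2n}$, so the two summands reduce to $\rT{\fPhi} \fS \fPhi$ and $-\rT{\fPhi} \fS \fPhi$ and cancel. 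Hence $\fW$ is constant in $t$, and since $\fW(\tInit) = \rT{\I{2n}} \Jtn \I{2n} = \Jtn$ we conclude $\rT{\fPhi(t)} \Jtn \fPhi(t) = \Jtn$ for all $t$, i.e.\ the Jacobian of the flow is symplectic at every point, which is exactly the assertion.

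The algebra in the final display is routine; the step that genuinely requires care — and the reason the twice-differentiability hypothesis is imposed — is the justification of the variational equation itself, namely that the flow is $C^1$ in the initial data and that the $t$- and $\fx$-derivatives may be exchanged. This is the standard $C^1$-dependence-on-initial-conditions theorem for ODEs with a $C^1$ right-hand side, which is precisely where $\Ham \in C^2$ is needed, since it yields a $C^1$ vector field $\XHam$ and a symmetric Hessian $\fS$.
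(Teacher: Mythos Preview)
Your argument is correct and is precisely the standard proof of symplecticity of Hamiltonian flows: derive the variational equation $\dot\fPhi = \Jtn \fS \fPhi$, use the symmetry of the Hessian $\fS$ together with $\TJtn\Jtn = \I{2n}$ and $\Jtn\Jtn = -\I{2n}$ to show $\ddt(\rT\fPhi\Jtn\fPhi)=0$, and conclude from the initial condition. The paper itself does not give a proof at all but simply refers to \cite[Chapter~VI, Theorem~2.4]{Hairer2006}; the argument found there is exactly the one you have written out, so your proposal is in full agreement with the intended proof.
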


\begin{proof}
	See \cite[Chapter VI, Theorem 2.4]{Hairer2006}.
\end{proof}

\subsection{Symplectic model order reduction for autonomous Hamiltonian systems}
\label{subsec:SymplMOR}
The goal of MOR \cite{LuminyBook2017} is to reduce the order, i.e.\ the dimension, of high dimensional systems. To this end, we approximate the high-dimensional state $\fx(t) \in \R^{2n}$ with
\begin{align*}
	\fx(t, \fmu) \approx \fxrc(t, \fmu) = \fV \fxr(t, \fmu),\quad&
&\redSpace = \colspanb{\fV}
\end{align*}
with the reduced state $\fxr(t) \in \R^{2k}$, the reduced-order basis (ROB) $\fV \in \R^{2n \times 2k}$, the reconstructed state $\fxrc(t) \in \redSpace$ and the reduced space $\redSpace \subset \R^{2n}$. The restriction to even-dimensional spaces $\R^{2n}$ and $\R^{2k}$ is not necessary for MOR in general but is required for the symplectic MOR in the following. To achieve a computational advantage with MOR, the approximation should introduce a clear reduction of the order, i.e.\ $2k \ll 2n$.

For Petrov--Galerkin projection-based MOR techniques, the ROB $\fV$ is accompanied by a projection matrix $\fW \in \R^{2n \times 2k}$ which is chosen to be biorthogonal to $\fV$, i.e. $\rT\fW \fV = \I{2k}$. The reduced-order model (ROM) is derived with the requirement that the residual $\fr(t, \fmu)$ vanishes in the space spanned by the columns of the projection matrix, i.e.\ in our case
\begin{align}\label{eq:Residual}
	&\fr(t, \fmu) = \ddt\fxrc(t, \fmu) - \XHam(\fxrc(t, \fmu), \fmu) \in \R^{2n},\quad&
&\rT\fW \fr(t, \fmu) = \Z{2k \times 1},
\end{align}
where $\Z{2k \times 1} \in \R^{2k}$ is the vector of all zeros. Due to the biorthogonality, this is equivalent to
\begin{align}\label{eq:ROM}
	&\ddt \fxr(t, \fmu) = \rT\fW \XHam(\fxrc(t, \fmu), \fmu) = \rT\fW \Jtn \gradx \Ham(\fxrc(t, \fmu), \fmu),\quad&
&\fxr(\tInit, \fmu) = \rT\fW \fxInit(\fmu).
\end{align}

In the context of symplectic MOR, the ROB is chosen to be a symplectic matrix \cref{eq:SymplMat} which we call a symplectic ROB. Additionally, the transposed projection matrix is the symplectic inverse $\rT\fW =\si\fV$ and the projection in \cref{eq:ROM} is called a symplectic projection or symplectic Galerkin projection \cite{Peng2016}. The (possibly oblique) projection reads
\begin{align*}
  \fP = \fV \invb{\rT\fW \fV} \rT\fW = \fV \invb{\si\fV \fV} \si\fV = \fV \si\fV.
\end{align*}
In combination, this choice of $\fV$ and $\fW$ guarantees that the Hamiltonian structure is preserved by the reduction which is shown in the following proposition.

\begin{Proposition}[Reduced autonomous Hamiltonian system]\label{theo:MORAutoHamSys}
	Let $\fV$ be a symplectic ROB with the projection matrix $\rT\fW = \si\fV$. Then, the ROM \cref{eq:ROM} of a high-dimensional Hamiltonian system $(\R^{2n}, \symplForm[2n], \Ham)$ is a Hamiltonian system $(\R^{2k}, \symplForm[2k], \Hamr)$ on $\R^{2k}$ with the canonical symplectic form $\symplForm[2k]$ and the reduced Hamiltonian function $\Hamr(\fxr, \fmu) = \Ham(\fV \fxr, \fmu)$ for all $\fxr \in \R^{2k}$.
\end{Proposition}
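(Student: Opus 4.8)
The plan is to verify directly that the reduced evolution equation \cref{eq:ROM} can be rewritten in the canonical Hamiltonian form $\ddt \fxr = \Jtk \gradxr \Hamr(\fxr, \fmu)$ with the claimed reduced Hamiltonian $\Hamr(\fxr, \fmu) = \Ham(\fV \fxr, \fmu)$. Since the reduced state lives in $\R^{2k}$ and $\Jtk$ is precisely the canonical Poisson matrix on $\R^{2k}$, establishing this one identity immediately certifies that $(\R^{2k}, \symplForm[2k], \Hamr)$ is a Hamiltonian system in the sense of the earlier definition, because the reduced right-hand side then equals the Hamiltonian vector field $\XrHamr$.

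First I would apply the chain rule to the reduced Hamiltonian, using that the Jacobian of the linear reconstruction $\fxr \mapsto \fV \fxr$ is $\fV$, to obtain $\gradxr \Hamr(\fxr, \fmu) = \rT\fV \gradx \Ham(\fV \fxr, \fmu) = \rT\fV \gradx \Ham(\fxrc, \fmu)$. Next I would insert the explicit form of the projection matrix $\rT\fW = \si\fV = \TJtk \rT\fV \Jtn$ from \cref{eq:SymplInv} into the right-hand side of \cref{eq:ROM}, yielding $\ddt \fxr = \TJtk \rT\fV \Jtn \Jtn \gradx \Ham(\fxrc, \fmu)$.

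The decisive step is then a purely structural simplification of the matrix product $\TJtk \rT\fV \Jtn \Jtn$. Invoking $\Jtn \Jtn = -\I{2n}$ from \cref{eq:StructMat} collapses the two inner Poisson matrices, and the analogous identity on $\R^{2k}$ gives $\TJtk = -\Jtk$ (equivalently $\inv{\Jtk} = -\Jtk$). Combining the two produces the key relation $\si\fV \Jtn = \Jtk \rT\fV$, whence $\ddt \fxr = \Jtk \rT\fV \gradx \Ham(\fxrc, \fmu) = \Jtk \gradxr \Hamr(\fxr, \fmu)$, which is exactly the desired canonical form.

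I do not anticipate a genuine obstacle; the argument is a short chain-rule-plus-substitution computation. The only point requiring care is the bookkeeping of the two different-sized Poisson matrices $\Jtn$ and $\Jtk$ together with their sign conventions, since it is precisely the cancellation of $\Jtn \Jtn = -\I{2n}$ against $\TJtk = -\Jtk$ that converts the symplectic inverse on the full space into the canonical structure matrix on the reduced space. Notably, the symplecticity of $\fV$ enters the structural identity $\si\fV \Jtn = \Jtk \rT\fV$ only through the explicit formula defining $\si\fV$; the biorthogonality $\si\fV \fV = \I{2k}$ is what guarantees that \cref{eq:ROM} is the correct reduced equation in the first place, but it is not otherwise invoked in the final simplification.
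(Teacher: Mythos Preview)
Your proposal is correct and follows essentially the same approach as the paper: both derive the structural identity $\si\fV \Jtn = \Jtk \rT\fV$ from \cref{eq:SymplInv} together with $\Jtn\Jtn = -\I{2n}$ and $\TJtk = -\Jtk$, and then combine it with the chain rule $\gradxr \Hamr(\fxr,\fmu) = \rT\fV \gradx \Ham(\fV\fxr,\fmu)$ to recast \cref{eq:ROM} in canonical Hamiltonian form. The paper also explicitly notes the biorthogonality $\si\fV\fV=\I{2k}$ up front, which you mention as well.
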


\begin{proof}
	First, we remark that the symplectic inverse is a valid biorthogonal projection matrix since it fulfils $\rT\fW \fV = \si\fV \fV = \I{2k}$. To derive the Hamiltonian form of the ROM in \cref{eq:ROM}, we use the identity
	\begin{align}\label{eq:RelationWV}
		\rT\fW \Jtn
= \si\fV \Jtn
\stackrel{\eqref{eq:SymplInv}}{=} \TJtk \rT\fV \Jtn \Jtn
= - \TJtk \rT\fV
= \Jtk \rT\fV,
	\end{align}
	which makes use of the properties \cref{eq:StructMat} of the Poisson matrix. It follows with \cref{eq:AutoHamEq,eq:ROM,eq:RelationWV}
	\begin{align*}
		\ddt \fxr(t)
= \rT\fW \Jtn \gradx \Ham(\fxrc(t))
= \Jtk \rT\fV \gradx \Ham(\fxrc(t))
= \Jtk \gradxr \Hamr(\fxr(t))
	\end{align*}
	where the last step follows from the chain rule. Thus, the evolution of the reduced state takes the form of Hamilton's equation and the resultant ROM is equal to the Hamiltonian system $(\R^{2k}, \symplForm[2k], \Hamr)$.
\end{proof}

\begin{Corollary}[Linear Hamiltonian system]\label{cor:QuadHam}
  Hamilton's equation is a linear system in the case of a quadratic Hamiltonian $\Ham(\fx, \fmu) = \txtfrac{1}{2} \; \rT\fx \fH(\fmu) \fx + \rT\fx \fh(\fmu)$ with $\fH(\fmu) \in \R^{2n \times 2n}$ symmetric and $\fh(\fmu) \in \R^{2n}$
	\begin{align}\label{eq:LinSys}
		\ddt \fx(t, \fmu) = \fA(\fmu) \fx(t,\fmu) + \fb(\fmu),\quad&
&\fA(\fmu) = \Jtn \fH(\fmu),\quad&
&\fb(\fmu) = \Jtn \fh(\fmu).
	\end{align}
	The evolution of the reduced Hamiltonian system reads
	\begin{align*}
		\ddt \fxr(t, \fmu) = \fAr(\fmu) \fxr(t,\fmu) + \fbr(\fmu),\quad&
&\begin{split}
	\fAr(\fmu) =&\; \Jtk \fHr(\fmu) \stackrel{\cref{eq:RelationWV}}{=} \rT\fW \fA(\fmu) \fV,\\
	\fbr(\fmu) =&\; \Jtk \fhr(\fmu) \stackrel{\cref{eq:RelationWV}}{=} \rT\fW \fb(\fmu) \fV,
\end{split}&
&\begin{split}
	\fHr(\fmu) =&\; \rT\fV \fH(\fmu) \fV,\\
	\fhr(\fmu) =&\; \rT\fV \fh(\fmu).
\end{split}
	\end{align*}
	with the reduced Hamiltonian function $\Hamr(\fxr, \fmu) = \txtfrac{1}{2}\; \rT\fxr \fHr(\fmu) \fxr + \rT\fxr \fhr(\fmu)$.
\end{Corollary}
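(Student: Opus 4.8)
The plan is to reduce everything to two routine gradient computations together with the structural identity \cref{eq:RelationWV}, so that no genuinely hard step arises; the only points demanding care are the use of the symmetry of $\fH$ and the bookkeeping that matches the symplectic-Galerkin form with the standard Petrov--Galerkin form.

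First I would establish the linear form of the full system. Since $\fH(\fmu)$ is symmetric, differentiating the quadratic Hamiltonian gives $\gradx \Ham(\fx,\fmu) = \fH(\fmu)\fx + \fh(\fmu)$ (the factor $\tfrac{1}{2}$ cancels precisely because $\fH = \rT\fH$). Substituting this into Hamilton's equation \cref{eq:AutoHamEq} yields
\begin{align*}
  \ddt \fx(t,\fmu) = \Jtn\lb \fH(\fmu)\fx(t,\fmu) + \fh(\fmu) \rb = \Jtn \fH(\fmu)\, \fx(t,\fmu) + \Jtn \fh(\fmu),
\end{align*}
which is exactly \cref{eq:LinSys} with $\fA(\fmu) = \Jtn\fH(\fmu)$ and $\fb(\fmu) = \Jtn\fh(\fmu)$.

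Second I would identify the reduced Hamiltonian. By \Cref{theo:MORAutoHamSys} the ROM is the Hamiltonian system with reduced Hamiltonian $\Hamr(\fxr,\fmu) = \Ham(\fV\fxr,\fmu)$. Expanding,
\begin{align*}
  \Hamr(\fxr,\fmu) = \tfrac{1}{2}\rTb{\fV\fxr}\fH(\fmu)\,\fV\fxr + \rTb{\fV\fxr}\fh(\fmu) = \tfrac{1}{2}\rT\fxr \lb \rT\fV\fH(\fmu)\fV\rb \fxr + \rT\fxr\lb \rT\fV\fh(\fmu)\rb,
\end{align*}
so $\Hamr$ is again a quadratic Hamiltonian with symmetric matrix $\fHr(\fmu) = \rT\fV\fH(\fmu)\fV$ and vector $\fhr(\fmu) = \rT\fV\fh(\fmu)$, matching the stated expressions.

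Third, the reduced dynamics follow by repeating the full-order computation on the reduced level: the gradient $\gradxr\Hamr(\fxr,\fmu) = \fHr(\fmu)\fxr + \fhr(\fmu)$ plugged into the reduced Hamilton's equation $\ddt\fxr = \Jtk\gradxr\Hamr$ supplied by \Cref{theo:MORAutoHamSys} gives $\fAr = \Jtk\fHr$ and $\fbr = \Jtk\fhr$. To recover the Petrov--Galerkin expressions I would invoke the identity \cref{eq:RelationWV}, namely $\rT\fW\Jtn = \Jtk\rT\fV$, which yields $\fAr = \Jtk\rT\fV\fH\fV = \rT\fW\Jtn\fH\fV = \rT\fW\fA\fV$ and likewise $\fbr = \Jtk\rT\fV\fh = \rT\fW\Jtn\fh = \rT\fW\fb$. (The expression for $\fbr$ collapses to $\rT\fW\fb$; the trailing factor $\fV$ in the stated formula appears to be a dimensional slip, since $\fb$ is a vector.) The main obstacle here is purely notational consistency --- correctly discharging the symmetry assumption so the gradient loses its $\tfrac{1}{2}$, and threading \cref{eq:RelationWV} through in the right direction --- rather than any conceptual difficulty.
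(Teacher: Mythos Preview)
Your argument is correct and is precisely the verification the paper leaves implicit: the corollary is stated without proof, and the intended justification is exactly the gradient computation for the quadratic $\Ham$, the application of \Cref{theo:MORAutoHamSys} to obtain $\Hamr(\fxr,\fmu)=\Ham(\fV\fxr,\fmu)$, and then the use of \cref{eq:RelationWV} to rewrite $\Jtk\rT\fV$ as $\rT\fW\Jtn$. You also correctly flag the dimensional slip in the stated formula for $\fbr$; the trailing $\fV$ is indeed spurious and the correct expression is $\fbr=\rT\fW\fb$.
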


\begin{Remark}
We emphasise that the reduction of linear Hamiltonian systems follows the pattern of the classical projection-based MOR approaches \cite{Haasdonk2011b} to derive the reduced model with $\fAr = \rT\fW \fA \fV$ and $\fbr = \rT\fW \fb$ which allows a straightforward implementation in existing frameworks.
\end{Remark}

Since the ROM is a Hamiltonian system, it preserves its Hamiltonian. Thus, it can be shown that the error in the Hamiltonian $\errHam(t, \fmu) = \Ham(\fx(t, \fmu), \fmu) - \Hamr(\fxr(t, \fmu), \fmu)$ is constant \cite{Peng2016}. Furthermore, there are a couple of results for the preservation of stability \cite[Theorem 18]{Maboudi2017}, \cite[Section 3.4.]{Peng2016} under certain assumptions on the Hamiltonian function.
%
%
%
%
%

\begin{Remark}[Offline/online decomposition]
A central concept in the field of MOR for parametric systems is the so-called offline/online decomposition. The idea is to split the procedure in a possibly costly offline phase and a cheap online phase where the terms costly and cheap refer to the computational cost. In the offline phase, the ROM is constructed. The online phase is supposed to evaluate the ROM fast. The ultimate goal is to avoid any computations that depend on the high dimension $2n$ in the online phase.

For a linear system, the offline/online decomposition can be achieved if $\fA(\fmu)$, $\fb(\fmu)$ and $\fxInit(\fmu)$ allow a parameter-separability condition \cite{Haasdonk2011b}.

For systems with non-linear parts, multiple approaches \cite{Barrault2004,Chaturantabut2009} exist to enable an offline/online decomposition by introducing an approximation of the non-linear terms. This allows an online-efficient MOR of non-linear systems. For symplectic MOR, the symplectic discrete empirical interpolation method (SDEIM) was introduced \cite[Section 5.2.]{Peng2016} to preserve the symplectic structure throughout the approximation of the non-linear terms.
\end{Remark}

\subsection{Finite-dimensional, non-autonomous Hamiltonian systems}
\label{subsec:NonAutoHam}
Non-autonomous Hamiltonian systems can be redirected to the case of autonomous systems if differentiability with respect to the time is assumed for the Hamiltonian function. The concept of the extended phase space is used. We briefly introduce the approach and explain the link to the symplectic MOR.

\begin{Definition}[Finite-dimensional, non-autonomous Hamiltonian system]
  Let $\Ham:\R \times \R^{2n} \times \paramSet \rightarrow \R$ be a scalar-valued function function that is continuously differentiable in the second argument. A non-autonomous (or time-dependent) Hamiltonian system $(\R^{2n}, \symplForm[2n], \Ham)$ is of the form
  \begin{align}\label{eq:NonAutoHamEq}
    \fx(t, \fmu) = \Jtn \gradx \Ham(t, \fx(t, \fmu), \fmu).
  \end{align}
  We therefore call $\Ham(t, \fx)$ a time-dependent Hamiltonian function.
\end{Definition}

A problem for non-autonomous Hamiltonian systems occurs as the explicit time dependence of the Hamiltonian function introduces an additional variable, the time, and the carrier manifold becomes odd-dimensional. As mentioned in \cref{subsec:SymplGeo}, symplectic vector spaces are always even-dimensional which is why a symplectic description is no longer possible. Different approaches are available to circumvent this issue.
%

As suggested in \cite[Section 4.3]{Maboudi2018}, we use the methodology of the so-called symplectic extended phase space \cite[Chap.\ VI, Sec.\ 10]{Lanczos1940} to redirect the non-autonomous system to an autonomous system. The formulation is based on the extended Hamiltonian function $\Hame: \R^{2n+2} \rightarrow \R$ with
\begin{align}\label{eq:ExtHam}
  &\Hame(\fxe) = \Ham(\qe, \fx) + \pe,&
  &\fxe = \rTb{\qe\; \fq\; \pe\; \fp} \in \R^{2n + 2},&
  &\fx = \rTb{\fq\; \fp} \in \R^{2n},&
  &\qe,\pe \in \R.
\end{align}
Technically, the time is added to the extended state $\fxe$ with $\qe = t$ and the corresponding momentum $\pe = -\Ham(t, \fx(t))$ is chosen such that the extended system is an autonomous Hamiltonian system.

This procedure requires the time-dependent Hamiltonian function to be differentiable in the time variable. Thus, it does for example not allow for the description of loads that are not differentiable in time in the context of mechanical systems. This might, e.g., exclude systems that model mechanical contact since loads that are not differentiable in time are required.

\subsection{Symplectic model order reduction of non-autonomous Hamiltonian systems}
\label{subsec:SymplMORExtSys}

For the MOR of the, now autonomous, extended system, only the original phase space variable $\fx \in \R^{2n}$ is reduced. The time and the corresponding conjugate momentum $\qe,\pe$ are not reduced. To preserve the Hamiltonian structure, a symplectic ROB $\fV \in \R^{2n \times 2k}$ is used for the reduction of $\fx \in \R^{2n}$ analogous to the autonomous case. The result is a reduced extended system which again can be written as a non-autonomous Hamiltonian system $(\R^{2k}, \symplForm[2k], \Hamr)$ with the time-dependent Hamiltonian $\Hamr(t,\fxr,\fmu) = \Ham(t,\fV \fxr, \fmu)$ for all $(t, \fxr) \in [\tInit, \tEnd] \times \R^{2k}$.

An unpleasant side effect of the extended formulation is that the linear dependency on the additional state variable $\pe$ (see \cref{eq:ExtHam}) implies that the Hamiltonian cannot have strict extrema. Thus, the stability results listed in \cite{Peng2016} and \cite{Maboudi2017} do not apply if there is a true time-dependence in the Hamiltonian $\Ham(t, \fx)$. Nevertheless, symplectic MOR in combination with a non-autonomous Hamiltonian system shows stable results in the numerical experiments.

Furthermore, it is important to note that only the extended Hamiltonian $\Hame$ is preserved throughout the reduction. The time-dependent Hamiltonian $\Ham(\cdot, t)$ is not necessarily preserved throughout the reduction, i.e. $\Hame(\fxe(t)) = \Hamer(\fxer(t))$ but potentially $\Ham(\fx(t),t) \neq \Hamr(\fx(t),t)$.

\section{Symplectic basis generation with the Proper Symplectic Decomposition (PSD)}
\label{subsec:PSD}
We yet require a symplectic ROB for symplectic MOR. In the following, we pursue the approach of a ROB generated from a set of snapshots of the system. A snapshot is an element of the so-called solution manifold $\solManifold$ that is approximated with a low-dimensional surrogate $\aprxSolManifold$
\begin{align*}
	&\solManifold := \lcb \fx(t, \fmu) \,\big|\, t \in \ftInterval,\, \fmu \in \paramSet \rcb
\subset \R^{2n},&
&\aprxSolManifold := \lcb \fV \fxr(t, \fmu) \,\big|\, t \in \ftInterval,\, \fmu \in \paramSet \rcb \approx \solManifold.
\end{align*}
In \cite{Peng2016}, the Proper Symplectic Decomposition (PSD) is proposed as a snapshot-based basis generation technique for symplectic ROBs. The idea is to derive the ROB from a minimization problem which is suggested in analogy to the very well established Proper Orthogonal Decomposition (POD, also Principal Component Analysis) \cite{LuminyBook2017}. 

Classically, the POD chooses the ROB $\fVPOD$ to minimize the sum over squared norms of all $\ns \in \N$ residuals $( \I{2n} - \fVPOD \rT\fVPOD ) \xs_i$ of the orthogonal projection $\fVPOD \rT\fVPOD \xs_i$ of the $1\leq i\leq \ns$ single snapshots $\xs_i \in \solManifold$ measured in the 2-norm $\tnorm{\bullet}$ with the constraint that the ROB $\fVPOD$ is orthogonal, i.e.
\begin{align}\label{eq:POD}
	&\minimize{\fVPOD \in \R^{2n \times 2k}}
\sum_{i=1}^{\ns} \tnorm{\lb \I{2n} - \fVPOD \rT\fVPOD \rb \xs_i}^2&
&\textrm{subject to} \quad \rT\fVPOD \fVPOD = \I{2k}.
\end{align}
In contrast, the PSD requires the ROB to be symplectic instead of orthogonal which is expressed in the reformulated constraint. Furthermore, the orthogonal projection is replaced by the symplectic projection $\fV \si\fV \xs_i$ which results in
\begin{align}\label{eq:PSDVectorBased}
	&\minimize{\fV \in \R^{2n \times 2k}}
\sum_{i=1}^{\ns} \tnorm{(\I{2n} - \fV \si\fV) \xs_i}^2&
&\textrm{subject to} \quad \rT\fV \Jtn \fV = \Jtk.
\end{align}
We summarize this in a more compact (matrix-based) formulation in the following definition.

\begin{Definition}[Proper Symplectic Decomposition (PSD)]
  Given $\ns$ snapshots $\xs_1, \dots, \xs_{\ns} \in \solManifold$, we denote the snapshot matrix as $\Xs = [\xs_1, \dots, \xs_{\ns}] \in \R^{2n \times \ns}$. Find a symplectic ROB $\fV \in \R^{2n \times 2k}$ which minimizes
	\begin{align} \label{eq:PSD}
		&\minimize{\fV \in \R^{2n \times 2k}}
\Fnorm{(\I{2n} - \fV \si\fV) \Xs}^2&
&\textrm{subject to} \quad \rT\fV \Jtn \fV = \Jtk,
	\end{align}
	We denote the minimization problem \cref{eq:PSD} in the following as $\PSD(\Xs)$, where $\Xs$ is the given snapshot matrix.
\end{Definition}

The constraint in \cref{eq:PSD} ensures that the ROB $\fV$ is symplectic and thus, guarantees the existence of the symplectic inverse $\si\fV$. Furthermore, the matrix-based formulation \cref{eq:PSD} is equivalent to the vector-based formulation presented in \cref{eq:PSDVectorBased} due to the properties of the Frobenius norm $\Fnorm{\bullet}$.

\subsection{Symplectic, orthonormal basis generation}
\label{sec:SymplOrthonBasisGen}

The foremost problem of the PSD is that there is no explicit solution procedure known so far due to the high nonlinearity and possibly multiple local optima. This is an essential difference to the POD as the POD allows to find a global minimum by solving an eigenvalue problem \cite{LuminyBook2017}.

Current solution procedures for the PSD restrict to a certain subset of symplectic matrices and derive an optimal solution for this subset which might be suboptimal in the class of symplectic matrices. In the following, we show that this subclass almost exclusively restricts to symplectic, orthonormal ROBs.

\begin{Definition}[Symplectic, orthonormal ROB]
  We call a ROB $\fV \in \R^{2n \times 2k}$ symplectic, orthonormal (also orthosymplectic, e.g.\ in \cite{Maboudi2017}) if it is symplectic w.r.t.\ $\symplForm[2n]$ and $\symplForm[2k]$ and is orthonormal, i.e.\ the matrix $\fV$ has orthonormal columns
  \begin{align*}
    &\rT\fV \Jtn \fV = \Jtk&
&\text{and}&
&\rT\fV \fV = \I{2k}.
  \end{align*}
\end{Definition}

In the following, we show an alternative characterization of a symplectic and orthonormal ROB. Therefore, we extend the results given e.g.\ in \cite{Paige1981} for square matrices $\fQ \in \R^{2n \times 2n}$ in the following \cref{lem:SymplOrthonROB} to the case of rectangular matrices $\fV \in \R^{2n \times 2k}$.  This was also partially addressed in \cite[Lemma 4.3.]{Peng2016}.

\begin{Proposition}[Characterization of a symplectic matrix with orthonormal columns]\label{lem:SymplOrthonROB}
The following statements are equivalent for any matrix $\fV \in \R^{2n \times 2k}$
\begin{enumerate}[label=(\roman*)]
  \item $\fV$ is symplectic with orthonormal columns,
  \item $\fV$ is of the form
  \begin{align}\label{eq:SymplOrthonROB}
	&\fV = \lsb \fE\quad \TJtn \fE \rsb =: \fVE \in \R^{2n \times 2k},&
&\fE \in \R^{2n \times k},&
&\rT\fE \fE = \I{k},&
&\rT\fE \Jtn \fE = \Z{k},
  \end{align}
  \item $\fV$ is symplectic and it holds $\rT\fV = \si\fV$.
\end{enumerate}
\end{Proposition}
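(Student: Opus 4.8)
The plan is to establish the three-way equivalence through the cycle (i)~$\Rightarrow$~(ii)~$\Rightarrow$~(iii)~$\Rightarrow$~(i), concentrating the real work in the first implication and reducing the other two to block computations with $\Jtn$ and $\Jtk$. The algebraic toolbox throughout is \cref{eq:StructMat}, i.e.\ $\TJtn = -\Jtn$, $\Jtn\Jtn = -\I{2n}$ and $\Jtn\TJtn = \I{2n}$, together with the defining relation \cref{eq:SymplInv} of the symplectic inverse.

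For (i)~$\Rightarrow$~(ii) I would split the candidate into its two column blocks $\fV = \lsb \fV_1 \quad \fV_2 \rsb$ with $\fV_1,\fV_2 \in \R^{2n \times k}$ and read off the eight block identities hidden in $\rT\fV \Jtn \fV = \Jtk$ and $\rT\fV\fV = \I{2k}$: orthonormality gives $\rT\fV_1\fV_1 = \rT\fV_2\fV_2 = \I{k}$ and $\rT\fV_1\fV_2 = \Z{k}$, while symplecticity gives $\rT\fV_1\Jtn\fV_1 = \rT\fV_2\Jtn\fV_2 = \Z{k}$ and $\rT\fV_1\Jtn\fV_2 = \I{k} = -\rT\fV_2\Jtn\fV_1$. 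Setting $\fE := \fV_1$ already supplies the two side conditions $\rT\fE\fE = \I{k}$ and $\rT\fE\Jtn\fE = \Z{k}$ demanded in \cref{eq:SymplOrthonROB}. The crux — and the step where a naive approach fails — is proving $\fV_2 = \TJtn\fE$. I would introduce $\fw := \fV_2 - \TJtn\fE$ and note that testing against the columns of $\fV$ yields only $\rT\fV\fw = \Z{2k \times k}$, i.e.\ $\fw \perp \colspanb\fV$, which is \emph{not} enough to force $\fw = \fzero$ because $\colspanb\fV$ is a proper subspace of $\R^{2n}$. The decisive move is instead to evaluate the Gram matrix $\rT\fw\fw$ directly: expanding, using $\rTb{\TJtn\fE} = \rT\fE\Jtn$ and the identities above, the four contributions collapse to $\I{k} - \I{k} - \I{k} + \I{k} = \Z{k}$. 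A vanishing Gram matrix means every column of $\fw$ has zero Euclidean norm, hence $\fw = \fzero$ and $\fV = \lsb\fE \quad \TJtn\fE\rsb$ has exactly the claimed form. I expect this norm computation to be the main obstacle of the whole proposition.

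For (ii)~$\Rightarrow$~(iii) I would substitute $\fV = \lsb\fE\quad\TJtn\fE\rsb$ into both defining relations of (iii). A $2\times 2$ block multiplication, again using $\rTb{\TJtn\fE} = \rT\fE\Jtn$ and \cref{eq:StructMat}, first verifies $\rT\fV\Jtn\fV = \Jtk$ so that $\fV$ is symplectic and $\si\fV$ is well defined, and then shows $\si\fV = \TJtk\rT\fV\Jtn = \rT\fV$, which is precisely (iii). Finally, (iii)~$\Rightarrow$~(i) is immediate: $\fV$ is symplectic by hypothesis, and combining $\rT\fV = \si\fV$ with the inverse relation $\si\fV\fV = \I{2k}$ following from \cref{eq:SymplInv} gives $\rT\fV\fV = \I{2k}$, i.e.\ orthonormal columns. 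This closes the cycle and completes the equivalence.
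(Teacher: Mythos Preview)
Your proposal is correct and follows the same cycle (i)~$\Rightarrow$~(ii)~$\Rightarrow$~(iii)~$\Rightarrow$~(i) as the paper, with matching block computations for the last two implications. The only difference is in the heart of (i)~$\Rightarrow$~(ii): you show $\fV_2 = \TJtn\fE$ by verifying that the Gram matrix $\rT\fw\fw$ of the difference vanishes, whereas the paper works column by column and invokes the equality case of Cauchy--Schwarz --- but since $\tnorm{\fe_i - \Jtn\ff_i}^2 = 2 - 2\ip{\fe_i}{\Jtn\ff_i}$, the two arguments are the same computation in different clothing.
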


We remark that these matrices are characterized in \cite{Peng2016} to be elements in $\text{Sp}(2k, \R^{2n}) \cap V_k(\R^{2n})$ where $\text{Sp}(2k, \R^{2n})$ is the symplectic Stiefel manifold and $V_k(\R^{2n})$ is the Stiefel manifold.

\begin{proof}
  ``(i) $\implies$ (ii)'': Let $\fV \in \R^{2n \times 2k}$ be a symplectic matrix with orthonormal columns. We rename the columns to $\fV = [\fE \quad \fF]$ with $\fE = [\fe_1, \dots, \fe_k]$ and $\fF = [\ff_1, \dots, \ff_k]$. The symplecticity of the matrix written in terms of $\fE$ and $\fF$ reads
  \begin{align}\label{eq:SymplROBForEF}
		\rT\fV \Jtn \fV
= 
\begin{bmatrix}
	\rT\fE \Jtn \fE & \rT\fE \Jtn \fF\\
	\rT\fF \Jtn \fE & \rT\fF \Jtn \fF
\end{bmatrix}
=
\begin{bmatrix}
 \Z{k} & \I{k}\\
 -\I{k} & \Z{k}
\end{bmatrix}
\iff
\begin{split}
	\rT\fE \Jtn \fE = \rT \fF \Jtn \fF &= \Z{k},\\
	-\rT\fF \Jtn \fE = \rT\fE \Jtn \fF &= \I{k}.
\end{split}
  \end{align}
  Expressed in terms of the columns $\fe_i,\ff_i$ of the matrices $\fE,\, \fF$, this condition reads for any $1\leq i,j\leq k$
  \begin{align*}
    &\rT\fe_i \Jtn \fe_j = 0, &
&\rT\fe_i \Jtn\ff_j = \delta_{ij}, &
&\rT\ff_i \Jtn \fe_j = -\delta_{ij}, &
&\rT\ff_i \Jtn\ff_j = 0,
  \end{align*}
  and the orthonormality of the columns of $\fV$ implies
  \begin{align*}
    &\rT\fe_i \fe_j = \delta_{ij},&
&\rT\ff_i \ff_j = \delta_{ij}.
  \end{align*}
  For a fixed $i \in \{1, \dots, k\}$, it is easy to show with $\TJtn \Jtn = \I{2n}$ that $\Jtn \ff_i$ is of unit length
  \begin{align*}
    1 = \delta_{ii} = \rT\ff_i \ff_i = \rT\ff_i \TJtn \Jtn \ff_i = \tnorm{\Jtn \ff_i}^2.
  \end{align*}
  Thus, $\fe_i$ and $\Jtn \ff_i$ are both unit vectors which fulfill $\rT\fe_i \Jtn \ff_i = \ip{\fe_i}{\Jtn\ff_i}_{\R^{2n}} = 1$. By the Cauchy-Schwarz inequality, it holds $\ip{\fe_i}{\Jtn\ff_i} = \norm{\fe_i} \norm{\Jtn\ff_i}$ if and only if the vectors are parallel. Thus, we infer $\fe_i = \Jtn \ff_i$ which is equivalent to $\ff_i = \TJtn \fe_i$. Since this holds for all $i \in \{1, \dots, k\}$, we conclude that $\fV$ is of the form proposed in \cref{eq:SymplOrthonROB}.

  ``(ii) $\implies$ (iii)'':
  Let $\fV$ be of the form \cref{eq:SymplOrthonROB}. Direct calculation yields
  \begin{align*}
    \rT\fV \Jtn \fV
= \begin{bmatrix} \rT\fE\\ \rT\fE \Jtn \end{bmatrix} \Jtn \begin{bmatrix} \fE & \TJtn\fE \end{bmatrix}
= \begin{bmatrix} \rT\fE \Jtn \fE & \rT\fE\fE\\ -\rT\fE \fE & \rT\fE \Jtn \fE \end{bmatrix}
\stackrel{\eqref{eq:SymplOrthonROB}}{=} \begin{bmatrix} \Z{k} & \I{k}\\ -\I{k} & \Z{k} \end{bmatrix}
= \Jtk
  \end{align*}
  which shows that $\fV$ is symplectic. Thus, the symplectic inverse $\si\fV$ exists. The following calculation shows that it equals the transposed $\rT\fV$
	\begin{align*}
		\si\fV
  = \TJtk \rT\fV \Jtn
	= \TJtk \begin{bmatrix} \rT\fE\\ \rT\fE\Jtn \end{bmatrix} \Jtn
	= \begin{bmatrix} -\rT\fE\Jtn\\ \rT\fE  \end{bmatrix} \Jtn
	= \begin{bmatrix} -\rT\fE\Jtn\Jtn\\ \rT\fE \Jtn  \end{bmatrix}
	= \begin{bmatrix} \rT\fE\\ \rT\fE\Jtn \end{bmatrix}
	= \rT\fV.
	\end{align*}
	
	``(iii) $\implies$ (i)'': Let $\fV$ be symplectic with $\rT \fV = \si\fV$. Then, we know that $\fV$ has orthonormal columns since
	\begin{align*}
		\I{k} = \si\fV \fV = \rT\fV \fV.
	\end{align*}
\end{proof}

\Cref{lem:SymplOrthonROB} essentially limits the symplectic, orthonormal ROB $\fV$ to be of the form \cref{eq:SymplOrthonROB}. Later in the current section, we see how to solve the PSD for ROBs of this type. In \Cref{sec:NonOrthPSD}, we are interested in ridding the ROB $\fV$ of this requirement to explore further solution methods of the PSD.

As mentioned before, the current solution procedures for the PSD almost exclusively restrict to the class of symplectic, orthonormal ROBs introduced in \cref{lem:SymplOrthonROB}. This includes the Cotangent Lift \cite{Peng2016}, the Complex SVD \cite{Peng2016}, partly the non-linear programming algorithm from \cite{Peng2016} and the greedy procedure presented in \cite{Maboudi2017}. We briefly review these approaches in the following proposition.

\begin{Proposition}[Symplectic, orthonormal basis generation]\label{lem:OrtonSymplBasisGen}
  The Cotangent Lift (CT), Complex SVD (cSVD) and the greedy procedure for symplectic basis generation all derive a symplectic and orthonormal ROB. The non-linear programming (NLP) admits a symplectic, orthonormal ROB if the coefficient matrix $\fC$ in \cite[Algorithm 3]{Peng2016} is symplectic and has orthonormal columns, i.e.\ it is of the form $\fC_{\fG} = [\fG \quad \TJtk \fG]$. The methods can be rewritten with $\fVE = [\fE \quad \TJtn\fE]$, where the different formulations of $\fE$ read
\begin{align*}
    &\fE_{\textrm{CT}} = \begin{bmatrix} \CT{\fPhi}\\ \Z{n \times k} \end{bmatrix}&
&\fE_{\textrm{cSVD}} = \begin{bmatrix} \cSVD{\fPhi}\\ \cSVD{\fPsi} \end{bmatrix},&
&\fE_{\textrm{greedy}} = [\fe_1, \dots, \fe_k],&
&\fE_{\textrm{NLP}} = \widetilde{\fVE} \fG
  \end{align*}
  where 
  \begin{enumerate}[label=(\roman*)]
    \item $\CT{\fPhi}, \cSVD{\fPhi}, \cSVD{\fPsi} \in \R^{n \times k}$ are matrices that fulfil
	  \begin{align*}
	    &\rT{\CT{\fPhi}}\CT{\fPhi} = \I{k},&
	  &\rT{\cSVD{\fPhi}} \cSVD{\fPhi} + \rT{\cSVD{\fPsi}} \cSVD{\fPsi} = \I{k},&
	  &\rT{\cSVD{\fPhi}} \cSVD{\fPsi} = \rT{\cSVD{\fPsi}} \cSVD{\fPhi},
	  \end{align*}
	  which is technically equivalent to $\rT\fE \fE = \I{k}$ and $\rT\fE \Jtn \fE = \Z{k}$ (see \cref{eq:SymplOrthonROB}) for $\fE_{\textrm{CT}}$ and $\fE_{\textrm{cSVD}}$,
	  \item $\fe_1, \dots, \fe_k \in \R^{2n}$ are the basis vectors selected by the greedy algorithm,
	  \item $\widetilde{\fVE} \in \R^{2n \times 2k}$ is a ROB computed from CT or cSVD and $\fG \in \R^{2k \times r}$, $r \leq k$, stems from the coefficient matrix $\fC_{\fG} = [\fG \quad \TJtk \fG]$ computed by the NLP algorithm.
  \end{enumerate}
\end{Proposition}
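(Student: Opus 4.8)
The plan is to reduce everything to the characterization just established in \Cref{lem:SymplOrthonROB}: a ROB is symplectic with orthonormal columns if and only if it has the form $\fVE = [\fE \quad \TJtn\fE]$ with $\rT\fE\fE = \I{k}$ and $\rT\fE\Jtn\fE = \Z{k}$. Hence, for each of the four generation techniques it suffices to write the produced ROB in this block form and to verify the two conditions on the associated $\fE$; orthosymplecticity then follows immediately from \Cref{lem:SymplOrthonROB}. This also settles the equivalence asserted in item~(i), since checking $\rT\fE\fE = \I{k}$ and $\rT\fE\Jtn\fE = \Z{k}$ for $\fE_{\textrm{CT}}$ and $\fE_{\textrm{cSVD}}$ is precisely the restatement of the conditions listed there.

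First I would treat the two SVD-type constructions, which are the most direct. For the Cotangent Lift I would compute $\TJtn\fE_{\textrm{CT}}$ explicitly from the block form of $\TJtn$; this recovers the familiar block-diagonal ROB whose two diagonal blocks equal $\CT{\fPhi}$ and whose off-diagonal blocks vanish, and by block multiplication it gives $\rT{\fE_{\textrm{CT}}}\fE_{\textrm{CT}} = \rT{\CT{\fPhi}}\CT{\fPhi} = \I{k}$ together with $\rT{\fE_{\textrm{CT}}}\Jtn\fE_{\textrm{CT}} = \Z{k}$, using only $\rT{\CT{\fPhi}}\CT{\fPhi}=\I{k}$. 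For the Complex SVD I would read $\cSVD{\fPhi}+\imagUnit\cSVD{\fPsi}$ as a matrix with unitary columns, so that $\Herb{\cSVD{\fPhi}+\imagUnit\cSVD{\fPsi}}(\cSVD{\fPhi}+\imagUnit\cSVD{\fPsi}) = \I{k}$; splitting this identity into real and imaginary parts yields exactly the two listed conditions $\rT{\cSVD{\fPhi}}\cSVD{\fPhi}+\rT{\cSVD{\fPsi}}\cSVD{\fPsi}=\I{k}$ and $\rT{\cSVD{\fPhi}}\cSVD{\fPsi}=\rT{\cSVD{\fPsi}}\cSVD{\fPhi}$. A short computation of $\rT{\fE_{\textrm{cSVD}}}\fE_{\textrm{cSVD}}$ and of $\rT{\fE_{\textrm{cSVD}}}\Jtn\fE_{\textrm{cSVD}}$ (the latter using that $\Jtn\fE_{\textrm{cSVD}}$ has blocks $\cSVD{\fPsi}$ and $-\cSVD{\fPhi}$) then matches these to $\rT\fE\fE=\I{k}$ and $\rT\fE\Jtn\fE=\Z{k}$.

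For the greedy procedure I would simply appeal to its construction: the symplectic orthogonalization carried out at each iteration selects $\fe_1,\dots,\fe_k$ so that $\fE_{\textrm{greedy}}=[\fe_1,\dots,\fe_k]$ satisfies $\rT\fE\fE=\I{k}$ and $\rT\fE\Jtn\fE=\Z{k}$ by design, which I would cite from \cite{Maboudi2017}. The NLP case is the one demanding genuine work. There the ROB is $\widetilde{\fVE}\fC_{\fG}$ with $\fC_{\fG} = [\fG \quad \TJtk\fG]$ and $\widetilde{\fVE}$ orthosymplectic from CT or cSVD. The key step is the intertwining identity $\Jtn\widetilde{\fVE} = \widetilde{\fVE}\Jtk$ (equivalently $\TJtn\widetilde{\fVE} = \widetilde{\fVE}\TJtk$), which I would derive from the block form $\widetilde{\fVE}=[\widetilde{\fE} \quad \TJtn\widetilde{\fE}]$ together with $\Jtn\TJtn = \I{2n}$ and $\TJtn=-\Jtn$. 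This identity lets me rewrite $\widetilde{\fVE}\fC_{\fG} = [\,\widetilde{\fVE}\fG \quad \TJtn\widetilde{\fVE}\fG\,]$, i.e.\ in the block form $[\fE_{\textrm{NLP}} \quad \TJtn\fE_{\textrm{NLP}}]$ with $\fE_{\textrm{NLP}}=\widetilde{\fVE}\fG\in\R^{2n\times r}$. Finally I would propagate the orthonormality and $\Jtk$-orthogonality of $\fG$ (which follow from $\fC_{\fG}$ being orthosymplectic, again by \Cref{lem:SymplOrthonROB} applied on $\R^{2k}$) through $\widetilde{\fVE}$, using $\rT{\widetilde{\fVE}}\widetilde{\fVE}=\I{2k}$ and $\rT{\widetilde{\fVE}}\Jtn\widetilde{\fVE}=\Jtk$, to obtain $\rT{\fE_{\textrm{NLP}}}\fE_{\textrm{NLP}}=\rT\fG\fG=\I{r}$ and $\rT{\fE_{\textrm{NLP}}}\Jtn\fE_{\textrm{NLP}}=\rT\fG\Jtk\fG=\Z{r}$.

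The main obstacle I anticipate is exactly this NLP step: establishing the commutation $\widetilde{\fVE}\TJtk=\TJtn\widetilde{\fVE}$ and then carefully tracking how orthonormality and symplecticity pass from $\fC_{\fG}$ and $\widetilde{\fVE}$ to their product. By contrast, the greedy claim is not self-contained, as it rests entirely on the orthogonalization guarantees of the algorithm of \cite{Maboudi2017}, so I would make explicit that this part is a citation rather than a fresh computation.
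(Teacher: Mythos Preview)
Your proposal is correct and follows the same approach as the paper: both reduce the claim to the characterization in \Cref{lem:SymplOrthonROB} by writing each produced ROB in the form $\fVE=[\fE\ \ \TJtn\fE]$ and verifying $\rT\fE\fE=\I{k}$, $\rT\fE\Jtn\fE=\Z{k}$. The paper's own proof is in fact a one-liner that simply asserts this and invokes \Cref{lem:SymplOrthonROB}, whereas you spell out the block computations for CT and cSVD, cite the greedy orthogonalization, and work through the intertwining identity $\TJtn\widetilde{\fVE}=\widetilde{\fVE}\TJtk$ for the NLP case; your added detail is sound and strictly more than the paper provides.
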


\begin{proof}
  All of the listed methods derive a symplectic ROB of the form $\fVE = [\fE \quad \TJtn\fE]$ which satisfies \cref{eq:SymplOrthonROB}. By \cref{lem:SymplOrthonROB}, these ROBs are each a symplectic, orthonormal ROB. 
\end{proof}

In the following, we show that PSD Complex SVD is the solution of the PSD in the subset of symplectic, orthonormal ROBs. This was partly shown in \cite{Peng2016} which yet lacked the final step that, restricting to orthonormal, symplectic ROBs, a solution of $\PSD([\Xs \quad -\Jtn \Xs])$ solves $\PSD(\Xs)$ and vice versa. This proves that the PSD Complex SVD is not only near optimal in this set but indeed optimal. Furthermore, the proof we show is alternative to the original and naturally motivates an alternative formulation of the PSD Complex SVD which we call the POD of $\Ys$ in the following. To begin with, we reproduce the definition of PSD Complex SVD from \cite{Peng2016}.

\begin{Definition}[PSD Complex SVD]\label{def:ComplexSVD}
  We define the complex snapshot matrix
  \begin{align}\label{eq:ComplexSnapMat}
    &\Cs = [\fqs_1 + \imagUnit \fps_1, \dots, \fqs_{\ns} + \imagUnit \fps_{\ns}] \in \Cn^{n \times \ns},&
    &\xs_j = \begin{bmatrix} \fq_j\\ \fp_j \end{bmatrix} \text{for all } 1\leq j\leq \ns
  \end{align}
  which is derived with the imaginary unit $\imagUnit$.
  The PSD Complex SVD is a basis generation technique that requires the auxiliary complex matrix $\fUC \in \Cn^{n \times k}$ to fulfil
  \begin{align}\label{eq:ComplexPOD}
    &\minimize{\fUC \in \Cn^{n \times k}} \Fnorm{\Cs - \fUC \cTb\fUC \Cs}^2&
    &\textrm{subject to} \quad \cTb\fUC \fUC = \I{k}
  \end{align}
  and builds the actual ROB $\fVE \in \R^{2n \times 2k}$ with
  \begin{align*}
    &\fVE = [\fE \quad \TJtn \fE],&
    &\fE = \begin{bmatrix} \Reb{\fUC}\\ \Imb{\fUC} \end{bmatrix}.
  \end{align*}
  The solution of \cref{eq:ComplexPOD} is known to be based on the left-singular vectors of $\Cs$ which can be explicitly computed with a complex version of the SVD.
\end{Definition}

We emphasize that we denote this basis generation procedure as PSD Complex SVD in the following to avoid confusions with the usual complex SVD algorithm.

\begin{Proposition}[Minimizing PSD in the set of symplectic, orthonormal ROBs]\label{theo:PsdForOrthSymplROBs}
  Given the snapshot matrix $\Xs \in \R^{2n \times \ns}$ we augment this with ``rotated'' snapshots to $\Ys = [\Xs \quad \Jtn \Xs]$. We assume that $2k$ is such that we obtain a gap in the singular values of $\Ys$, i.e. $\sigma_{2k}(\Ys) > \sigma_{2k+1}(\Ys)$. Then, minimizing the PSD in the set of symplectic, orthonormal ROBs is equivalent to the following minimization problem
  \begin{align}\label{eq:PsdForOrthSymplROBs}
    &\minimize{\fV \in \R^{2n \times 2k}} \Fnorm{(\I{2n} - \fV \rT\fV) \begin{bmatrix} \Xs & \Jtn \Xs \end{bmatrix}}^2&
&\textrm{subject to} \quad\rT\fV \fV = \I{2k}.
  \end{align}
  Clearly, this is equivalent to the POD \cref{eq:POD} applied to the snapshot matrix $\Ys$. We, thus, call this procedure the POD of $\Ys$ in the following. A minimizer can be derived with the SVD as it is common for POD \cite{LuminyBook2017}.
\end{Proposition}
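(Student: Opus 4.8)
The plan is to exploit that on the feasible set of symplectic, orthonormal ROBs the symplectic inverse collapses to the transpose. By statement (iii) of \Cref{lem:SymplOrthonROB}, every such $\fV$ satisfies $\si\fV = \rT\fV$, so the PSD objective $\Fnorm{(\I{2n} - \fV\si\fV)\Xs}^2$ already equals the orthogonal-projection objective $\Fnorm{(\I{2n} - \fV\rT\fV)\Xs}^2$. The first key step is to show that the orthogonal projector $\fP := \fV\rT\fV$ commutes with the Poisson matrix. Writing $\fV = \fVE = [\fE \quad \TJtn\fE]$ as in \cref{eq:SymplOrthonROB}, one computes $\fP = \fE\rT\fE + \TJtn(\fE\rT\fE)\Jtn$, and using $\Jtn\TJtn = \I{2n}$, $\Jtn\Jtn = -\I{2n}$ and $\TJtn = -\Jtn$ from \cref{eq:StructMat}, a direct calculation yields $\Jtn\fP = \fP\Jtn = \Jtn\fE\rT\fE + \fE\rT\fE\Jtn$.

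With this identity at hand I would split the POD objective on $\Ys = [\Xs \quad \Jtn\Xs]$ into its two blocks. Since $\fP$ commutes with $\Jtn$ we have $(\I{2n}-\fP)\Jtn\Xs = \Jtn(\I{2n}-\fP)\Xs$, and because $\Jtn$ is orthogonal ($\TJtn\Jtn = \I{2n}$) it preserves the Frobenius norm. Hence, for every symplectic, orthonormal $\fV$,
\begin{align*}
  \Fnorm{(\I{2n}-\fP)\Ys}^2 = \Fnorm{(\I{2n}-\fP)\Xs}^2 + \Fnorm{(\I{2n}-\fP)\Jtn\Xs}^2 = 2\,\Fnorm{(\I{2n}-\fP)\Xs}^2.
\end{align*}
Thus on the feasible set of symplectic, orthonormal ROBs the POD-on-$\Ys$ objective is exactly twice the PSD objective, so the two problems share the same minimizers over this set.

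The substantive part, which I expect to be the main obstacle, is to show that relaxing the constraint in \cref{eq:PsdForOrthSymplROBs} from symplectic-orthonormal to merely orthonormal does not change the optimum: I must exhibit a minimizer of the POD on $\Ys$ that is itself symplectic and orthonormal. To this end I would compute the Gram matrix $\Ys\rT\Ys = \Xs\rT\Xs - \Jtn(\Xs\rT\Xs)\Jtn$ and verify, again with $\Jtn\Jtn = -\I{2n}$, that it commutes with $\Jtn$. Consequently each eigenspace of $\Ys\rT\Ys$ is $\Jtn$-invariant; since $\Jtn$ restricts to a complex structure ($\Jtn\Jtn = -\I{2n}$) on it, every eigenspace is even-dimensional and the singular values of $\Ys$ occur with even multiplicity. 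The gap assumption $\sigma_{2k}(\Ys) > \sigma_{2k+1}(\Ys)$ therefore necessarily cuts between two complete eigenspaces, so the dominant $2k$-dimensional left-singular subspace $\redSpace^\star$ — which realizes the POD minimum and is unique by the gap — is a union of complete eigenspaces and hence $\Jtn$-invariant.

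Finally I would construct a symplectic, orthonormal basis of the $\Jtn$-invariant subspace $\redSpace^\star$ by a Gram--Schmidt-type recursion. Choosing a unit vector $\fe_1 \in \redSpace^\star$, the vector $\TJtn\fe_1$ again lies in $\redSpace^\star$, is of unit length, and is orthogonal to $\fe_1$ because $\rT\fe_1\Jtn\fe_1 = 0$ by \cref{eq:StructMat}; the orthogonal complement of $\mspan\{\fe_1, \TJtn\fe_1\}$ within $\redSpace^\star$ is again $\Jtn$-invariant, so the recursion continues and produces $\fE = [\fe_1,\dots,\fe_k]$ with $\rT\fE\fE = \I{k}$ and $\rT\fE\Jtn\fE = \Z{k}$, the remaining skew relations following from the skew-symmetry of $\Jtn$ together with $\Jtn$-invariance. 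By \Cref{lem:SymplOrthonROB}, the matrix $\fVE = [\fE \quad \TJtn\fE]$ is then symplectic, orthonormal and spans $\redSpace^\star$, so it attains the orthonormal POD minimum. Combining this with the factor-two identity above shows that the POD of $\Ys$ delivers exactly the symplectic, orthonormal minimizer of the PSD, which is the claimed equivalence.
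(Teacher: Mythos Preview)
Your proof is correct and follows essentially the same three-part strategy as the paper: establish the factor-two identity between the PSD and the POD-of-$\Ys$ objectives on symplectic, orthonormal ROBs; show that $\Ys\rT\Ys$ commutes with $\Jtn$ so the dominant singular subspace is $\Jtn$-invariant; and exhibit a symplectic, orthonormal basis of that subspace. The only notable difference is in the last step: the paper argues directly that left-singular vectors of $\Ys$ occur in pairs $(\fu,\TJtn\fu)$ and rearranges the SVD accordingly, whereas you run an explicit Gram--Schmidt inside the $\Jtn$-invariant dominant subspace; your construction is a bit more careful about repeated singular values within the top $2k$, but the underlying idea is the same.
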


\begin{proof}
The proof proceeds in three steps: we show
\begin{enumerate}[label=(\roman*)]
  \item that $(\fu,\fv)$ is a pair of left- and right-singular vectors of $\Ys$ to the singular value $\sigma$ if and only if $(\TJtn \fu, \TJtns \fv)$ also is a pair of left- and right-singular vectors of $\Ys$ to the same singular value $\sigma$,
  \item that a solution of the POD of $\Ys$ is a symplectic, orthonormal ROB, i.e.\ $\fV = \fVE = [\fE \quad \TJtn \fE]$,
  \item that the POD of $\Ys$ is equivalent to the PSD for symplectic, orthonormal ROBs.
\end{enumerate}
  We start with the first step (i). Let $(\fu,\fv)$ be a pair of left- and right-singular vectors of $\Ys$ to the singular value $\sigma$. We use that the left-singular (or right-singular) vectors of $\Ys$ are a set of orthonormal eigenvectors of $\Ys\rT\Ys$ (or $\rT\Ys\Ys$). To begin with, we compute
  \begin{align} \label{eq:rotation}
  \begin{split}
    \TJtn \Ys \rT\Ys \Jtn
=&\; \TJtn (\Xs \rT\Xs + \Jtn\Xs\rT\Xs\TJtn) \Jtn
= \TJtn\Xs\rT\Xs\Jtn + \Xs\rT\Xs
= \Ys \rT\Ys,\\
    \TJtns \rT\Ys \Ys \Jtns
=&\; \TJtns \begin{bmatrix} \rT\Xs\Xs & \rT\Xs \Jtn\Xs\\ \rT\Xs\TJtn\Xs & \rT\Xs\Xs \end{bmatrix} \Jtns
= \begin{bmatrix} \rT\Xs\Xs & -\rT\Xs \TJtn\Xs\\ -\rT\Xs\Jtn\Xs & \rT\Xs\Xs \end{bmatrix}
= \rT\Ys \Ys
  \end{split}
  \end{align}
where we use $\TJtns = - \Jtns$. Thus, we can reformulate the eigenvalue problems of $\Ys\rT\Ys$ and, respectively, $\rT\Ys\Ys$ as
  \begin{align*}
    \sigma \fu
=&\; \Ys \rT\Ys \fu
= \Jtn \TJtn \Ys \rT\Ys \Jtn \TJtn \fu&
&\stackrel{\TJtn \cdot \vert}{\iff}&
\sigma \TJtn\fu
=&\; \TJtn \Ys \rT\Ys \Jtn \TJtn\fu
\stackrel{\cref{eq:rotation}}{=} \Ys \rT\Ys \TJtn\fu\\
    \sigma \fv
=&\; \rT\Ys \Ys \fv
= \Jtns \TJtns \rT\Ys \Ys \Jtns \TJtns \fv&
&\stackrel{\TJtns \cdot \vert}{\iff}&
\sigma \TJtns\fv
=&\; \TJtns \rT\Ys \Ys \Jtns \TJtns\fv
\stackrel{\cref{eq:rotation}}{=} \rT\Ys \Ys \TJtns\fv.
  \end{align*}
  Thus, $(\TJtn\fu, \TJtns\fv)$ is necessarily another pair of left- and right-singular vectors of $\Ys$ with the same singular value $\sigma$. We infer that the left-singular vectors $\fu_i$, $1\leq i\leq 2n$, ordered by the magnitude of the singular values in a descending order can be written as
\begin{align}\label{eq:PODOfYs}
   \fU = [\fu_1 \quad \TJtn \fu_1 \quad \fu_2 \quad \TJtn \fu_2 \quad \dots \fu_n \quad \TJtn \fu_n] \in \R^{2n \times 2n}.
\end{align}
  
  For the second step (ii), we remark that the solution of the POD is explicitly known to be any matrix which stacks in its columns $2k$ left-singular vectors of the snapshot matrix $\Ys$ with the highest singular value \cite{LuminyBook2017}. Due to the special structure \cref{eq:PODOfYs} of the singular vectors for the snapshot matrix $\Ys$, a minimizer of the POD of $\Ys$ necessarily adopts this structure. We are allowed to rearrange the order of the columns in this matrix and thus, the result of the POD of $\Ys$ can always be rearranged to the form
\begin{align*}
  \fVE = [\fE \quad \TJtn\fE],&
  &\fE = [\fu_1 \quad \fu_2 \quad \dots \quad \fu_k],&
  &\TJtn\fE = [\TJtn\fu_1 \quad \TJtn\fu_2 \quad \dots \quad \Jtn\fu_k].
\end{align*}
Note that it automatically holds that $\rT\fE \fE = \I{k}$ and $\rT\fE (\Jtn \fE) = \Z{k}$ since, in both products, we use the left-singular vectors from the columns of the matrix $\fU$ from \cref{eq:PODOfYs} which is known to be orthogonal from properties of the SVD. Thus, \cref{eq:SymplOrthonROB} holds and we infer from \cref{lem:SymplOrthonROB} that the POD of $\Ys$ indeed is solved by a symplectic, orthonormal ROB.
  
  For the final step (iii), we define the orthogonal projection operators
  \begin{align*}
    &\fPVE = \fVE \rTb\fVE = \fE \rT\fE + \TJtn\fE \rT\fE \Jtn,&
    &\fPVEoc = \I{2n} - \fPVE.
  \end{align*}
  Both are idempotent and symmetric, thus $\rTb\fPVEoc \fPVEoc = \fPVEoc \fPVEoc = \fPVEoc$. Due to $\Jtn\TJtn = \I{2n}$, it further holds 
  \begin{align*}
    \Jtn \rTb\fPVEoc \fPVEoc \TJtn = \Jtn \fPVEoc \TJtn = \Jtn\TJtn - \Jtn \fE \rT\fE \TJtn - \Jtn \TJtn\fE \rT\fE \Jtn \TJtn = \fPVEoc = \rTb\fPVEoc \fPVEoc.
  \end{align*}
  Thus, it follows
  \begin{align*}
    \Fnorm{\fPVEoc \Xs}^2
= \traceb{\rT\Xs \rTb\fPVEoc \fPVEoc \Xs}
= \traceb{\rT\Xs \Jtn \rTb\fPVEoc \fPVEoc \TJtn \Xs}
= \Fnorm{\fPVEoc \TJtn \Xs}^2
  \end{align*}
  and with $\Ys = [\Xs \quad \TJtn \Xs]$
  \begin{align*}
    2 \Fnorm{\fPVEoc \Xs}^2 = \Fnorm{\fPVEoc \Xs}^2 + \Fnorm{\fPVEoc \TJtn \Xs}^2
= \Fnorm{\fPVEoc [\Xs \quad \TJtn \Xs]}^2
= \Fnorm{\fPVEoc \Ys}^2,
  \end{align*}
  where we use in the last step that for two matrices $\fA \in \R^{2n \times u}$, $\fB \in \R^{2n \times v}$ for $u,v \in \N$, it holds $\Fnorm{\fA}^2 + \Fnorm{\fB}^2 = \Fnorm{[\fA \quad \fB]}^2$ for the Frobenius norm $\Fnorm{\bullet}$.
  
  Since it is equivalent to minimize a function $f: \R^{2n \times 2k} \rightarrow \R$ or a multiple $c f$ of it for any positive constant $c \in \Rpos$, minimizing $\Fnorm{\fPVEoc \Xs}^2$ is equivalent to minimizing $2\Fnorm{\fPVEoc \Xs}^2 = \Fnorm{\fPVEoc \Ys}^2$. Additionally, for a ROB of the form $\fVE = [\fE \quad \TJtn \fE]$ the constraint of orthonormal columns is equivalent to the requirements in \cref{eq:SymplOrthonROB}. Thus, to minimize the PSD in the class of symplectic, orthonormal ROBs is equivalent to the POD of $\Ys$ \cref{eq:PsdForOrthSymplROBs}.
\end{proof}

\begin{Remark} \label{rem:EqivPSD}
We remark that in the same fashion as the proof of step (iii) in \cref{theo:PsdForOrthSymplROBs}, it can be shown that, restricting to symplectic, orthonormal ROBs, a solution of $\PSD([\Xs \quad \Jtn\Xs])$ is a solution of $\PSD(\Xs)$ and vice versa, which is one detail that was missing in \cite{Peng2016} to show the optimality of PSD Complex SVD in the set of symplectic, orthonormal ROBs.
\end{Remark}

We next prove that PSD Complex SVD is equivalent to POD of $\Ys$ from \cref{eq:PsdForOrthSymplROBs} and thus, also minimizes the PSD in the set of symplectic, orthonormal bases. To this end, we repeat the optimality result from \cite{Peng2016} and extend it with the results of the present paper.

\begin{Proposition}[Optimality of PSD Complex SVD]
\label{theo:OptimalityCSVD}
Let $\mathbb{M}_2 \subset \R^{2n \times 2k}$ denote the set of symplectic bases with the structure $\fVE = [\fE \quad \TJtn\fE]$. The PSD Complex SVD solves $\PSD([\Xs \quad -\Jtn\Xs])$ in $\mathbb{M}_2$.
\end{Proposition}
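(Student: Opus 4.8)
The plan is to reduce the claim to the equivalence, already established in \cref{theo:PsdForOrthSymplROBs}, between the PSD over symplectic, orthonormal ROBs and the POD of the augmented snapshot matrix, and then to identify PSD Complex SVD with that POD through the standard real representation of complex matrices. First I would record that, by \cref{lem:SymplOrthonROB}, the set $\mathbb{M}_2$ is exactly the set of symplectic, orthonormal ROBs, so that every $\fV \in \mathbb{M}_2$ satisfies $\si\fV = \rT\fV$ and its symplectic projection equals the orthogonal projection. Consequently, restricting $\PSD([\Xs \quad -\Jtn\Xs])$ to $\mathbb{M}_2$ replaces $(\I{2n} - \fV\si\fV)$ by $(\I{2n} - \fV\rT\fV)$, i.e.\ it becomes the POD of $\Ys := [\Xs \quad -\Jtn\Xs]$. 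Since the Frobenius norm is insensitive to the sign of a block of columns, this is precisely the POD of $\Ys$ analysed in \cref{theo:PsdForOrthSymplROBs}, and \cref{rem:EqivPSD} ensures the corresponding PSD and POD problems have the same minimizers in $\mathbb{M}_2$. It therefore remains to show that the output $\fVE$ of PSD Complex SVD is a minimizer of the POD of $\Ys$.

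The key device is the real representation $\rho$ that sends a complex matrix $\fM = \fM_1 + \imagUnit \fM_2$ (with real $\fM_1, \fM_2$) to the real block matrix $\begin{bmatrix} \fM_1 & -\fM_2 \\ \fM_2 & \fM_1 \end{bmatrix}$. I would verify the elementary properties that $\rho$ is multiplicative, that $\rho(\cTb{\fM}) = \rT{\rho(\fM)}$, that $\Fnorm{\rho(\fM)}^2 = 2\,\Fnorm{\fM}^2$, and that $\rho$ restricts to a bijection between the complex matrices $\fUC \in \Cn^{n \times k}$ with $\cTb{\fUC}\fUC = \I{k}$ and the elements of $\mathbb{M}_2$. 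A short block computation then yields the two crucial identifications $\Ys = \rho(\Cs)$ and $\fVE = \rho(\fUC)$: writing $\Xs = \begin{bmatrix} \fQ \\ \fP \end{bmatrix}$ for the stacked position and momentum snapshots so that $\Cs = \fQ + \imagUnit \fP$, one finds $-\Jtn\Xs = \begin{bmatrix} -\fP \\ \fQ \end{bmatrix}$ and hence $\Ys = \begin{bmatrix} \fQ & -\fP \\ \fP & \fQ \end{bmatrix} = \rho(\Cs)$; moreover, since multiplication by $\imagUnit$ corresponds to applying $\TJtn$, the block $\TJtn\fE$ supplies exactly the off-diagonal real and imaginary parts, giving $\fVE = [\fE \quad \TJtn\fE] = \rho(\fUC)$ with $\fE = \begin{bmatrix} \Reb{\fUC} \\ \Imb{\fUC} \end{bmatrix}$.

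With these identifications in place, applying $\rho$ to $\Cs - \fUC\cTb{\fUC}\Cs$ and using multiplicativity together with the norm-doubling property gives the pointwise identity $\Fnorm{\Cs - \fUC\cTb{\fUC}\Cs}^2 = \tfrac{1}{2}\Fnorm{(\I{2n} - \fVE\rT{\fVE})\Ys}^2$, valid for every feasible $\fUC$ and its image $\fVE = \rho(\fUC)$. Since $\rho$ is a bijection between the two constraint sets and scales the objective by the positive constant $\tfrac{1}{2}$, it maps minimizers to minimizers; hence a solution $\fUC$ of the complex POD \cref{eq:ComplexPOD} produces a minimizer $\fVE$ of the POD of $\Ys$ over $\mathbb{M}_2$, which by the first paragraph is a solution of $\PSD([\Xs \quad -\Jtn\Xs])$ in $\mathbb{M}_2$.

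The main obstacle I anticipate is bookkeeping rather than conceptual: correctly matching the complex structure to the Poisson matrix, in particular the fact that multiplication by $\imagUnit$ corresponds to $\TJtn$ and not $\Jtn$, and tracking all signs so that $\Ys = \rho(\Cs)$ and $\fVE = \rho(\fUC)$ hold with exactly the $[\Xs \quad -\Jtn\Xs]$ and $[\fE \quad \TJtn\fE]$ conventions of the statement. A secondary point to state carefully is that the factor $2$ in the Frobenius-norm identity is harmless, since minimizers are invariant under multiplication of the objective by a positive constant, which is precisely why the complex POD and the POD of $\Ys$ select the same optimal basis.
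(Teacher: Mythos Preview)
Your proof is correct and, in fact, supplies substantially more than the paper does: the paper's own proof of this proposition is just the citation ``See \cite[Theorem~4.5.]{Peng2016}.'' Your approach via the real representation $\rho$ is the standard and natural self-contained argument, and your bookkeeping of signs ($\Ys = \rho(\Cs)$ with $\Ys = [\Xs\ -\Jtn\Xs]$, and $\fVE = \rho(\fUC)$ with $\fVE = [\fE\ \TJtn\fE]$) is accurate.

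One minor streamlining: your first paragraph invokes \cref{theo:PsdForOrthSymplROBs} and \cref{rem:EqivPSD}, but neither is strictly needed for this proposition. Once you have (from \cref{lem:SymplOrthonROB}) that $\mathbb{M}_2$ coincides with the symplectic, orthonormal ROBs and hence $\si\fV = \rT\fV$ on $\mathbb{M}_2$, the PSD functional on $[\Xs\ -\Jtn\Xs]$ restricted to $\mathbb{M}_2$ is literally $\Fnorm{(\I{2n}-\fVE\rT\fVE)\Ys}^2$. Your $\rho$-bijection then carries the complex POD \cref{eq:ComplexPOD} over unitary $\fUC$ onto exactly this problem over $\mathbb{M}_2$, which is already the desired conclusion; you do not need to pass through the unconstrained POD of $\Ys$ at all. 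In the paper's logical flow, \cref{theo:OptimalityCSVD} is meant to be an independent input (from \cite{Peng2016}) that is then combined with \cref{theo:PsdForOrthSymplROBs} in \cref{prop:EquivalenceComplexSVD}, so keeping your argument free of \cref{theo:PsdForOrthSymplROBs} matches that structure more cleanly.
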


\begin{proof}
  See \cite[Theorem 4.5.]{Peng2016}.
\end{proof}

\begin{Proposition}[Equivalence of POD of $\Ys$ and PSD Complex SVD]\label{prop:EquivalenceComplexSVD}
  PSD Complex SVD is equivalent to the POD of $\Ys$. Thus, PSD Complex SVD yields a minimizer of the PSD for symplectic, orthonormal ROBs.
\end{Proposition}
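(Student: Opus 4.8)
The plan is to recognize the entire construction as a single realification. Introduce the realification map $R$ sending a complex matrix $\fA + \imagUnit\fB$ to $\begin{bmatrix}\fA & -\fB\\ \fB & \fA\end{bmatrix}$; it is multiplicative, satisfies $R(\cTb\fA)=\rTb{R(\fA)}$ and $\Fnorm{R(\fA)}^2 = 2\Fnorm{\fA}^2$, sends $\I{k}$ to $\I{2k}$, is injective, and encodes multiplication by $\imagUnit$ as the block $\TJtn$. The decisive observation is that the PSD Complex SVD ROB of \cref{def:ComplexSVD} is exactly this realification: since $\fE=\begin{bmatrix}\Reb\fUC\\ \Imb\fUC\end{bmatrix}$ and $\TJtn\fE=\begin{bmatrix}-\Imb\fUC\\ \Reb\fUC\end{bmatrix}$, one has $\fVE=[\fE\quad\TJtn\fE]=R(\fUC)$, and moreover $R(\Cs)=[\Xs\quad\TJtn\Xs]$. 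Thus both the basis and (up to a sign) the snapshot matrix $\Ys$ are realifications of their complex counterparts, and the claim should reduce to transporting the complex POD \cref{eq:ComplexPOD} through $R$.

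First I would match the feasible sets. Using $\fVE=R(\fUC)$ and the homomorphism properties gives $\rT\fVE\fVE = R(\cTb\fUC\fUC)$, so by injectivity of $R$ the constraint $\rT\fVE\fVE=\I{2k}$ holds if and only if $\cTb\fUC\fUC=\I{k}$; writing out the real and imaginary blocks shows this is precisely $\rT\fE\fE=\I{k}$ together with $\rT\fE\Jtn\fE=\Z{k}$, i.e.\ condition \cref{eq:SymplOrthonROB}, which by \cref{lem:SymplOrthonROB} characterizes symplectic, orthonormal ROBs. Hence $\fUC\mapsto\fVE=R(\fUC)$ is a bijection from the feasible set of \cref{eq:ComplexPOD} onto the feasible set of the POD of $\Ys$. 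Next I would match the objectives. The realified projector factorizes as $\fVE\rTb\fVE=\fE\rT\fE+\TJtn\fE\rT\fE\Jtn=R(\fUC\cTb\fUC)$, so $\fPVEoc:=\I{2n}-\fVE\rTb\fVE=R(\I{n}-\fUC\cTb\fUC)$. Invoking the sign-insensitive Frobenius reduction proved in step~(iii) of \cref{theo:PsdForOrthSymplROBs}, namely $\Fnorm{\fPVEoc\Ys}^2=2\Fnorm{\fPVEoc\Xs}^2=\Fnorm{\fPVEoc[\Xs\quad\TJtn\Xs]}^2=\Fnorm{\fPVEoc R(\Cs)}^2$, and then the homomorphism and norm-doubling properties of $R$, I obtain
\begin{align*}
  \Fnorm{\fPVEoc\Ys}^2
  &= \Fnorm{R(\I{n}-\fUC\cTb\fUC)\,R(\Cs)}^2
  = 2\,\Fnorm{(\I{n}-\fUC\cTb\fUC)\Cs}^2 .
\end{align*}
Thus along the bijection the POD-of-$\Ys$ objective is exactly twice the complex POD objective, so any minimizer $\fUC$ of \cref{eq:ComplexPOD} is carried to a minimizer $\fVE$ of the POD of $\Ys$, and conversely. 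Since \cref{theo:PsdForOrthSymplROBs} identifies the POD of $\Ys$ with the PSD restricted to symplectic, orthonormal ROBs, this establishes the claim.

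The hard part will be the two bookkeeping identities $\rT\fE\Jtn\fE=\Imb{\cTb\fUC\fUC}$ and $\fVE\rTb\fVE=R(\fUC\cTb\fUC)$: both amount to checking that the $\TJtn$ block implements multiplication by $\imagUnit$ with the correct sign, and care is needed to reconcile the matrix $\Ys=[\Xs\quad\Jtn\Xs]$ appearing in the objective with the realification $R(\Cs)=[\Xs\quad\TJtn\Xs]$, a discrepancy absorbed by the sign-insensitive reduction already available from \cref{theo:PsdForOrthSymplROBs}. As a consistency check one can confirm the singular-value picture: $\Ys\rT\Ys=R(\Cs)\rT{R(\Cs)}=R(\Cs\cTb\Cs)$, so each complex left-singular vector $\fu$ of $\Cs$ lifts to the pair $\begin{bmatrix}\Reb\fu\\ \Imb\fu\end{bmatrix}$, $\TJtn\begin{bmatrix}\Reb\fu\\ \Imb\fu\end{bmatrix}$ of left-singular vectors of $\Ys$ sharing the singular value, matching the pairing of step~(i) of \cref{theo:PsdForOrthSymplROBs}; in particular $\sigma_{2k}(\Ys)>\sigma_{2k+1}(\Ys)$ is exactly $\sigma_k(\Cs)>\sigma_{k+1}(\Cs)$, so the dominant subspaces selected by the two procedures coincide.
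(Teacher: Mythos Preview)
Your argument is correct and takes a genuinely different route from the paper. The paper's proof is a short assembly of three ingredients: it quotes the external optimality result from \cite{Peng2016} (reproduced as \cref{theo:OptimalityCSVD}) that PSD Complex SVD minimizes $\PSD([\Xs\ -\Jtn\Xs])$ over bases of the form $[\fE\ \TJtn\fE]$, identifies that set with the symplectic, orthonormal ROBs via \cref{lem:SymplOrthonROB}, and then uses \cref{rem:EqivPSD} to pass from $\PSD([\Xs\ -\Jtn\Xs])$ to $\PSD(\Xs)$. Your proof bypasses the cited optimality result entirely by working directly with the realification homomorphism $R$: you show that $R$ carries the constraint $\cTb\fUC\fUC=\I{k}$ to the symplectic, orthonormal condition and carries the complex objective to (half of) the real one, so the two minimization problems are literally the same problem in two coordinate systems. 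This is more self-contained and arguably more transparent, since it explains structurally \emph{why} the complex SVD is the right tool rather than appealing to a black-box theorem. The paper's approach, on the other hand, is shorter precisely because it leans on that existing result.

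One small wording issue: your sentence ``is a bijection from the feasible set of \cref{eq:ComplexPOD} onto the feasible set of the POD of $\Ys$'' overstates things, since the feasible set of \cref{eq:PsdForOrthSymplROBs} is all of $\{\fV:\rT\fV\fV=\I{2k}\}$ and the image of $R$ is only the symplectic, orthonormal subset. Your argument is unaffected because you correctly invoke \cref{theo:PsdForOrthSymplROBs} at the end to guarantee that a minimizer of the POD of $\Ys$ lands in that subset, but the sentence itself should read ``onto the set of symplectic, orthonormal ROBs.''
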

\begin{proof}
  By \cref{theo:OptimalityCSVD}, PSD Complex SVD minimizes \cref{eq:PsdForOrthSymplROBs} in the set $\mathbb{M}_2$ of symplectic bases with the structure $\fVE = [\fE \quad \TJtn\fE]$. Thus, \cref{eq:SymplROBForEF} holds with $\fF = \TJtn \fE$ which is equivalent to the conditions on $\fE$ required in \cref{eq:SymplOrthonROB}. By \cref{lem:SymplOrthonROB}, we infer that $\mathbb{M}_2$ equals the set of symplectic, orthonormal bases.
  
  Furthermore, we can show that, in the set $\mathbb{M}_2$, a solution of $\PSD([\Xs \quad -\Jtn\Xs])$ is a solution of $\PSD(\Xs)$ and vice versa (see \cref{rem:EqivPSD}). Thus, PSD Complex SVD minimizes the PSD for the snapshot matrix $\Xs$ in the set of orthonormal, symplectic matrices and PSD Complex SVD and the POD of $\Ys$ solve the same minimization problem.
\end{proof}

We emphasize that the computation of a minimizer of \cref{eq:PsdForOrthSymplROBs} via PSD Complex SVD requires less memory storage than the computation via POD of $\Ys$. The reason is that the complex formulation uses the complex snapshot matrix $\Cs \in \Cn^{n \times \ns}$ which equals $2 \cdot n \cdot \ns$ floating point numbers while the solution with the POD of $\Ys$ method artificially enlarges the snapshot matrix to $\Ys \in \R^{2n \times 2\ns}$ which are $4 \cdot n \cdot \ns$ floating point numbers. Still, the POD of $\Ys$ might be computationally more efficient since it is a purely real formulation and thereby does not require complex arithmetic operations.

\subsection{Symplectic, non-orthonormal basis generation}
\label{sec:NonOrthPSD}
In the next step, we want to give an idea how to leave the class of symplectic, orthonormal ROBs. We call a basis generation technique symplectic, non-orthonormal if it is able to compute a symplectic, non-orthonormal basis.

In \cref{lem:OrtonSymplBasisGen}, we briefly showed that most existing symplectic basis generation techniques generate a symplectic, orthonormal ROB. The only exception is the NLP algorithm suggested in \cite{Peng2016}. It is able to compute a non-orthonormal, symplectic ROB. The algorithm is based on a given initial guess $\fV_0 \in \R^{2n \times 2k}$ which is a symplectic ROB e.g.\ computed with PSD Cotangent Lift or PSD Complex SVD. Nonlinear programming is used to leave the class of symplectic, orthonormal ROBs and derive an optimized symplectic ROB $\fV = \fV_0 \fC$ with the symplectic coefficient matrix $\fC \in \R^{2k \times 2r}$ for some $r \leq k$. Since this procedure searches a solution spanned by the columns of $\fV_0$, it is not suited to compute a global optimum of the PSD which we are interested in the scope of this paper.

In the following, we present a new basis generation technique that is based on an SVD-like decomposition for matrices $\fB \in \R^{2n \times m}$ presented in \cite{Xu2003}. To this end, we introduce this decomposition in the following.

\begin{Proposition}[SVD-like decomposition \cite{Xu2003}]\label{lem:SVDlikeDecomp}
Any real matrix $\fB \in \R^{2n \times m}$ can be decomposed as the product of a symplectic matrix $\fS \in \R^{2n \times 2n}$, a sparse and potentially non-diagonal matrix $\fD \in \R^{2n \times m}$ and an orthogonal matrix $\fQ \in \R^{m \times m}$ with
\begin{align} \label{eq:SVDlikeDecomp}
  &\fB = \fS \fD \fQ,&
  &\fD =  
  \begin{blockarray}{
    >{\hspace{\arraycolsep}\idxsize}c*{3}{>{\idxsize}c}<{\hspace{\arraycolsep}}
    >{\idxsize}c
  }
    p & q & p & m-2p-q \\
    \begin{block}{
      (>{\hspace{\arraycolsep}}cccc<{\hspace{\arraycolsep}})
      >{\idxsize}c
    }
      \fSigmaS & \Z{} & \Z{} & \Z{} & p \\
      \Z{}     & \I{} & \Z{} & \Z{} & q \\
      \Z{}     & \Z{} & \Z{} & \Z{} & n-p-q \\
      \Z{}     & \Z{} & \fSigmaS & \Z{} & p \\
      \Z{}     & \Z{} & \Z{} & \Z{} & q \\
      \Z{}     & \Z{} & \Z{} & \Z{} & n-p-q \\
    \end{block}
  \end{blockarray},
  &\begin{split}
    \fSigmaS = \diag(\sigmaS_1, \dots, \sigmaS_p) \in \R^{p \times p},\\
    \sigmaS_i > 0 \quad\text{for } 1\leq i\leq p.
  \end{split}
\end{align}
with $p,q \in \N$ and $\rank(\fB) = 2p+q$ and where we indicate the block row and column dimensions in $\fD$ by small letters. The diagonal entries $\sigmaS_i$, $1 \leq i\leq p$, of the matrix $\fSigmaS$ are related to the pairs of purely imaginary eigenvalues $\lambda_j(\fM), \lambda_{p+j}(\fM) \in \Cn$ of $\fM = \rT\fB \Jtn \fB \in \R^{m \times m}$ with 
\begin{align*}
  &\lambda_j(\fM) = -(\sigmaS_j)^2\imagUnit,&
  &\lambda_{p+j}(\fM) = (\sigmaS_j)^2\imagUnit,&
  &1 \leq j\leq p.
\end{align*}
\end{Proposition}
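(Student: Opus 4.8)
The plan is to regard this as a symplectic analogue of the ordinary SVD, with the symmetric matrix $\rT\fB\fB$ replaced by the skew-symmetric matrix $\fM := \rT\fB\Jtn\fB \in \R^{m\times m}$. Since $\rT\fM = \rT\fB\,\rTb{\Jtn}\,\fB = -\fM$, the matrix $\fM$ is skew-symmetric, so its nonzero eigenvalues occur in purely imaginary conjugate pairs $\pm\mu_i\imagUnit$ with $\mu_i>0$, and $\rankb\fM$ is even, say $2p$. A short computation using $\Jtn\fe_{n+i}=\fe_i$ for the unit coordinate vectors shows that the target matrix $\fD$ is designed precisely so that $\fN := \rT\fD\Jtn\fD$ is in canonical skew form: it carries the $2\times2$ blocks $\big[\begin{smallmatrix}0 & (\sigmaS_i)^2\\ -(\sigmaS_i)^2 & 0\end{smallmatrix}\big]$ coupling columns $i$ and $p+q+i$, and vanishes on the remaining $q+(m-2p-q)$ columns. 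Because $\fS$ is symplectic ($\rT\fS\Jtn\fS=\Jtn$) and $\fQ$ is orthogonal, the identity $\fB=\fS\fD\fQ$ forces $\fQ\fM\rT\fQ = \rTb{\fS\fD}\Jtn\fS\fD = \fN$. Thus $\fQ$ is necessarily an orthogonal matrix canonicalizing $\fM$, the values must satisfy $(\sigmaS_i)^2=\mu_i$, and the claimed relation $\lambda(\fM)=\pm(\sigmaS_i)^2\imagUnit$ is immediate from invariance of the spectrum under orthogonal similarity.

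Concretely I would build $\fQ$ in two stages. An SVD of $\fB$ supplies an orthogonal factor moving the null space of $\fB$ into the trailing columns, so that $\fB' := \fB\rT\fQ$ has its last $m-r$ columns zero, where $r:=\rankb\fB$, and its leading block $\hat\fB\in\R^{2n\times r}$ has full column rank. As the columns of $\hat\fB$ form a basis of $W:=\colspanb\fB$, the matrix $\rT{\hat\fB}\Jtn\hat\fB$ represents the symplectic form restricted to $W$; a further orthogonal transformation on these $r$ columns (the real canonical form of a skew-symmetric matrix, followed by a permutation) brings it to the pattern above and shows $\rankb{\rT{\hat\fB}\Jtn\hat\fB}=2p$, so the radical of the restricted form has dimension $r-2p =: q$ and $r = 2p+q = \rankb\fB$. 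Composing the two orthogonal maps into a single $\fQ$, we may henceforth assume $\rTb{\fB'}\Jtn\fB' = \fN$.

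It remains to factor $\fB' = \fS\fD$ with $\fS$ symplectic. Writing $\hat\fb_1,\dots,\hat\fb_r$ for the nonzero columns of $\fB'$, I read off from the shape of $\fD$ the candidate vectors $\fs_i := \hat\fb_i/\sigmaS_i$ and $\fs_{n+i} := \hat\fb_{p+q+i}/\sigmaS_i$ for $1\le i\le p$, together with $\fs_{p+j} := \hat\fb_{p+j}$ for $1\le j\le q$. The relation $\rTb{\fB'}\Jtn\fB' = \fN$ translates into exactly the symplectic orthonormality relations $\rT{\fs_i}\Jtn\fs_{n+i}=1$ with all other symplectic products among these $2p+q$ vectors vanishing, while full column rank of $\hat\fB$ yields their linear independence. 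These are precisely the symplectic Gram relations of the corresponding subset of the standard symplectic basis, the $q$ radical vectors $\fs_{p+j}$ playing the role of position directions whose conjugate momenta lie outside $W$. The decisive step is then to invoke the symplectic basis extension theorem (the analogue of Witt's theorem, valid because $\Jtn$ is non-degenerate on all of $\R^{2n}$) to complete this partial symplectic system to a full symplectic basis $\fs_1,\dots,\fs_{2n}$; setting $\fS := [\fs_1\;\cdots\;\fs_{2n}]$ gives a symplectic matrix with $\fS\fD = \fB'$, hence $\fB = \fS\fD\fQ$.

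I expect the principal obstacle to lie in this completion step in the presence of degeneracy: one must cleanly separate the kernel of $\fM$ into the genuine null space of $\fB$ (yielding the $m-2p-q$ zero columns of $\fD$) and the nonzero isotropic directions (the $q$ unit columns), treat the latter as halves of symplectic pairs whose partners are recruited from outside $W$, and verify that the extension is compatible with keeping $\fD$ in the stated sparse form. The remaining work---pinning down the ordering and signs so that the blocks land in the positions prescribed in \cref{eq:SVDlikeDecomp}, and recording $\sigmaS_i=\sqrt{\mu_i}$---is routine bookkeeping.
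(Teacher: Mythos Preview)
The paper does not prove this proposition: it is stated with a citation to Xu (2003) and immediately followed by a remark, with no proof environment. There is therefore no ``paper's own proof'' to compare against; the result is imported wholesale from the reference.

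That said, your sketch is the natural construction and matches the spirit of the original source. The key structural observations---that $\fM=\rT\fB\Jtn\fB$ is skew-symmetric, that an orthogonal $\fQ$ brings it to the block-canonical form $\fN=\rT\fD\Jtn\fD$, that $\rankb\fB=2p+q$ with $2p=\rankb\fM$ and $q$ the dimension of the radical of the restricted form on $\colspanb\fB$, and that the prescribed columns of $\fS$ then satisfy exactly the relations of a partial symplectic basis---are all correct. Your identification of the completion step (extending $2p+q$ vectors with the right symplectic Gram relations to a full symplectic basis of $\R^{2n}$) as the crux is accurate, and the symplectic Witt/extension theorem handles it. One small point worth tightening: in your first stage you factor out the null space of $\fB$ via an SVD, and in the second you orthogonally canonicalize $\rT{\hat\fB}\Jtn\hat\fB$; you should check that these two orthogonal changes of variable can be composed into a single $\fQ$ that simultaneously produces both the trailing zero columns of $\fD$ and the block pattern in the leading $2p+q$ columns---this requires that the second orthogonal transformation act only on the first $r$ coordinates, which it does, so the composition is fine.
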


\begin{Remark}[Singular values]\label{rem:SingVal}
We call the diagonal entries $\sigmaS_i$, $1 \leq i\leq p$, of the matrix $\fSigmaS$ from \cref{lem:SVDlikeDecomp} in the following the symplectic singular values. The reason is the following analogy to the classical SVD.

The classical SVD decomposes $\fB \in \R^{2n \times m}$ as $\fB = \fU \fSigma \rT\fV$ where $\fU \in \R^{2n \times 2n}$, $\fV \in \R^{m \times m}$ are each orthogonal matrices and $\fSigma \in \R^{2n \times m}$ is a diagonal matrix with the singular values $\sigma_i$ on its diagonal $\diag(\fSigma) = [\sigma_1, \dots, \sigma_r, 0, \dots, 0] \in \R^{\min(2n,m)}$, $r = \rank(\fB)$. The singular values are linked to the real eigenvalues of $\fN = \rT\fB \fB$ with $\lambda_i(\fN) = \sigma_i^2$. Furthermore, for the SVD, it holds due to the orthogonality of $\fU$ and $\fV$, respectively, $\rT\fB \fB = \rT\fV \fSigma^2 \fV$ and $\fB \rT\fB = \rT\fU \fSigma^2 \fU$.

A similar relation can be derived for an SVD-like decomposition from \cref{lem:SVDlikeDecomp}. Due to the structure of the decomposition \cref{eq:SVDlikeDecomp} and the symplecticity of $\fS$, it holds
\begin{align} \label{eq:SymplSingVal}
  \begin{split}
	  \rT\fB \Jtn \fB
	  =&\; \rT\fQ \rT\fD \overbrace{\rT\fS \Jtn \fS}^{=\Jtn} \fD \fQ\\
	  =&\; \rT\fQ \rT\fD \Jtn \fD \fQ,
	\end{split}
  &\rT\fD \Jtn \fD =
  \begin{blockarray}{
    >{\hspace{\arraycolsep}\idxsize}c*{3}{>{\idxsize}c}<{\hspace{\arraycolsep}}
    >{\idxsize}c
  }
    p & q & p & m-2p-q & \\
    \begin{block}{
      (>{\hspace{\arraycolsep}}cccc<{\hspace{\arraycolsep}})
      >{\idxsize}c
    }
			  \Z{}        & \Z{} & \fSigmaS^2 & \Z{} & p\\
			  \Z{}        & \Z{} & \Z{}       & \Z{} & q\\
			  -\fSigmaS^2 & \Z{} & \Z{}       & \Z{} & p\\
			  \Z{}        & \Z{} & \Z{}       & \Z{} & m-2p-q\\
    \end{block}
  \end{blockarray}.
\end{align}
This analogy is why we call the diagonal entries $\sigmaS_i$, $1 \leq i\leq p$, of the matrix $\fSigmaS$ symplectic singular values.
\end{Remark}

The idea for the basis generation now is to select $k \in \N$ pairs of columns of $\fS$ in order to compute a symplectic ROB. The selection should be based on the importance of these pairs which we characterize by the following proposition by linking the Frobenius norm of a matrix with the symplectic singular values.

\begin{Proposition}\label{prop:FrobSymplSingVal}
Let $\fB \in \R^{2n \times m}$ with an SVD-like decomposition $\fB =  \fS \fD \fQ$ with $p,q \in \N$ from \cref{lem:SVDlikeDecomp}. The Frobenius norm of $\fB$ can be rewritten as
\begin{align}\label{eq:FrobSymplSingVal}
  &\Fnorm{\fB}^2 = \sum_{i=1}^{p+q} (\wsigmaS_i)^2,&
  &\wsigmaS_i = \begin{cases}
    \sigmaS_i \sqrt{\tnorm{\fs_{i}}^2 + \tnorm{\fs_{n+i}}^2}, & 1\leq i\leq p,\\
    \tnorm{\fs_{i}}, & p+1 \leq i\leq p+q
  \end{cases}
\end{align}
where $\fs_i \in \R^{2n}$ is the $i$-th column of $\fS$ for $1\leq i\leq 2n$. In the following, we refer to each $\wsigmaS_i$ as the weighted symplectic singular value.
\end{Proposition}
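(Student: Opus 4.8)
The plan is to exploit two facts: that the Frobenius norm is invariant under right-multiplication by an orthogonal matrix, and that $\fD$ in \cref{eq:SVDlikeDecomp} has at most one nonzero entry in each column. First I would strip off the orthogonal factor $\fQ$. Using $\Fnorm{\fA}^2 = \traceb{\rT\fA \fA}$ together with $\fQ \rT\fQ = \I{m}$ and the cyclicity of the trace, one gets $\Fnorm{\fB}^2 = \Fnorm{\fS\fD\fQ}^2 = \traceb{\rT\fQ \rT\fD \rT\fS \fS \fD \fQ} = \traceb{\rT\fD \rT\fS \fS \fD} = \Fnorm{\fS\fD}^2$. This reduces the claim to evaluating $\Fnorm{\fS\fD}^2$, and notably does \emph{not} require the symplecticity of $\fS$ (which entered only in \cref{rem:SingVal}).

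Next I would compute $\Fnorm{\fS\fD}^2$ column by column via $\Fnorm{\fS\fD}^2 = \sum_{\ell=1}^m \tnorm{\fS \fd_\ell}^2$, where $\fd_\ell$ denotes the $\ell$-th column of $\fD$. Reading off the block structure of $\fD$ directly: for $1\le j\le p$ the column $\fd_j$ has the single entry $\sigmaS_j$ in row $j$, so $\fS \fd_j = \sigmaS_j \fs_j$; the next $q$ columns each carry a single unit entry in rows $p+1,\dots,p+q$, giving $\fS \fd_{p+j} = \fs_{p+j}$ for $1\le j\le q$; the following $p$ columns carry $\sigmaS_j$ in rows $n+1,\dots,n+p$ (the lower copy of $\fSigmaS$), giving $\fS\fd_{p+q+j} = \sigmaS_j \fs_{n+j}$ for $1\le j\le p$; and the remaining $m-2p-q$ columns of $\fD$ vanish and contribute nothing.

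Summing the squared column norms then yields $\Fnorm{\fS\fD}^2 = \sum_{j=1}^p (\sigmaS_j)^2 \tnorm{\fs_j}^2 + \sum_{j=1}^q \tnorm{\fs_{p+j}}^2 + \sum_{j=1}^p (\sigmaS_j)^2 \tnorm{\fs_{n+j}}^2$. Grouping the first and third sums by the common factor $(\sigmaS_j)^2$ produces the $1\le i\le p$ branch of $\wsigmaS_i$, while the middle sum, reindexed by $i = p+j$, produces the $p+1\le i\le p+q$ branch, giving exactly \cref{eq:FrobSymplSingVal}. I expect no genuine analytic difficulty here; the only step requiring care is the index bookkeeping, in particular tracking that the lower copy of $\fSigmaS$ occupies rows $n+1,\dots,n+p$ and therefore selects the columns $\fs_{n+1},\dots,\fs_{n+p}$ of $\fS$ rather than $\fs_{p+q+1},\dots,\fs_{2p+q}$. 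Everything else is a direct consequence of the orthogonality of $\fQ$ and the sparsity pattern of $\fD$.
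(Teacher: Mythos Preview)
Your proposal is correct and matches the paper's proof essentially step for step: strip off $\fQ$ by orthogonal invariance of the Frobenius norm, then expand $\Fnorm{\fS\fD}^2$ using the sparsity of $\fD$ to obtain the three sums, and regroup. The paper phrases the second step via $\trace(\rT\fD \rT\fS \fS \fD)$ rather than an explicit column-by-column computation, but this is the same calculation.
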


\begin{proof}
  We insert the SVD-like decomposition $\fB = \fS \fD \fQ$ and use the orthogonality of $\fQ$ to reformulate
  \begin{align*}
    \Fnorm{\fB}^2 
=&\; \Fnorm{\fS \fD \fQ}^2
= \Fnorm{\fS \fD}^2
= \trace(\rT\fD \rT\fS \fS \fD)
= \sum_{i=1}^{p} (\sigmaS_i)^2 \rT\fs_i \fs_i + \sum_{i=1}^{p} (\sigmaS_i)^2 \rT\fs_{n+i} \fs_{n+i} + \sum_{i=1}^q \rT\fs_{p+i} \fs_{p+i}\\
  =&\; \sum_{i=1}^p (\sigmaS_i)^2 \lb \tnorm{\fs_{i}}^2 + \tnorm{\fs_{n+i}}^2 \rb
+ \sum_{i=1}^q \tnorm{\fs_{p+i}}^2
  \end{align*}
  which is equivalent to \cref{eq:FrobSymplSingVal}.
\end{proof}

It proves true in the following \cref{prop:PSDDecay} that we can delete single addends $\wsigmaS_i$ in \cref{eq:FrobSymplSingVal} with the symplectic projection used in the PSD if we include the corresponding pair of columns in the ROB. This will be our selection criterion in the new basis generation technique that we denote PSD SVD-like decomposition.

\begin{Definition}[PSD SVD-like decomposition]\label{def:PSDSVDlike}
We compute an SVD-like decomposition \cref{eq:SVDlikeDecomp} as $\Xs = \fS \fD \fQ$ of the snapshot matrix $\Xs \in \R^{2n \times \ns}$ and define $p,q \in \N$ as in \cref{lem:SVDlikeDecomp}. In order to compute a ROB $\fV$ with $2k$ columns, find the $k$ indices $i \in \idxSetPSD = \{i_1, \dots, i_k\} \subset \{1, \dots, p+q\}$ which have large contributions $\wsigmaS_i$ in \cref{eq:FrobSymplSingVal} with
\begin{align} \label{eq:IdxPSDSVDlike}
  \idxSetPSD = \argmax_{\substack{\mathcal{I} \subset \{1, \dots, p+q\}\\ \abs{\mathcal{I}} = k}}
\lb \sum_{i \in \mathcal{I}} \lb \wsigmaS_i\rb^2 \rb.
\end{align}
To construct the ROB, we choose the $k$ pairs of columns $\fs_i \in \R^{2n}$ from $\fS$ corresponding to the selected indices $\idxSetPSD$ such that
\begin{align*}
  \fV = [\fs_{i_1}, \dots, \fs_{i_k}, \fs_{n+i_1}, \dots, \fs_{n+i_k}] \in \R^{2n \times 2k}.
\end{align*}
\end{Definition}

The special choice of the ROB is motivated by the following theoretical result which is very analogous to the results known for the classical POD in the framework of orthogonal projections.

\begin{Proposition}[Projection error by neglegted weighted symplectic singular values]\label{prop:PSDDecay}
Let $\fV \in \R^{2n \times 2k}$ be a ROB constructed with the procedure described in \cref{def:PSDSVDlike} to the index set $\idxSetPSD \subset \{1, \dots, p+q\}$ with $p,q \in \N$ from \cref{lem:SVDlikeDecomp}. The PSD functional can be calculated by
\begin{align} \label{eq:PSDDecay}
  \Fnorm{(\I{2n} - \fV\si\fV) \Xs}^2
= \sum_{i \in \{1, \dots, p+q\} \setminus \idxSetPSD} \lb \wsigmaS_i \rb^2,
\end{align}
which is the cumulative sum of the squares of the neglected weighted symplectic singular values.
\end{Proposition}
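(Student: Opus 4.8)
The plan is to reduce the projection error to a column-wise sum over the columns of $\fS$ and then exploit the symplecticity relations among those columns. First I would record that the constructed $\fV$ is itself symplectic: writing the columns of the symplectic matrix $\fS$ as $\fs_1, \dots, \fs_{2n}$, the identity $\rT\fS \Jtn \fS = \Jtn$ is equivalent to $\rT\fs_i \Jtn \fs_j = 0 = \rT\fs_{n+i}\Jtn\fs_{n+j}$ and $\rT\fs_i\Jtn\fs_{n+j} = \delta_{ij}$ for $1 \leq i,j \leq n$. Because the ROB collects the pairs $\fs_{i_a}, \fs_{n+i_a}$ for distinct indices $i_a \in \idxSetPSD$, a direct block computation of $\rT\fV\Jtn\fV$ using these relations yields $\Jtk$, so $\fV$ is symplectic and the symplectic inverse $\si\fV$ and the idempotent symplectic projection $\fP := \fV\si\fV$ are well defined. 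Since $\fQ$ is orthogonal, right-multiplication by $\fQ$ leaves the Frobenius norm invariant, so I would immediately replace $\Xs$ by $\fS\fD$ and work with $\Fnorm{(\I{2n} - \fP)\fS\fD}^2$.

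The crucial step, and the one I expect to be the main obstacle, is to determine how $\si\fV$ acts on an arbitrary column $\fs_j$ of $\fS$. Using $\si\fV = \TJtk\rT\fV\Jtn$ together with the symplecticity relations above, I would show that $\rT\fV\Jtn\fs_j$ has at most one nonzero entry, equal to $\pm1$, and that this happens precisely when $j$ lies in the selected set $\idxSetPSD \cup (n + \idxSetPSD)$, and is the zero vector otherwise. Concretely this gives $\si\fV\fs_j = \fzero$ for every unselected column, hence $(\I{2n} - \fP)\fs_j = \fs_j$, whereas tracking the nonzero entry through $\TJtk$ shows $\fP\fs_{i_a} = \fs_{i_a}$ and $\fP\fs_{n+i_a} = \fs_{n+i_a}$ for the selected columns, so these are annihilated by $\I{2n}-\fP$. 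In words, the symplectic projection keeps the selected columns of $\fS$ untouched and annihilates all unselected ones; the subtlety is that this clean splitting is forced by the symplectic pairing of $\fS$ and not by any orthogonality, so it must be extracted entirely from the relations $\rT\fs_i\Jtn\fs_{n+j} = \delta_{ij}$.

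With this in hand the remainder is bookkeeping. I would insert the sparse structure of $\fD$ from \cref{eq:SVDlikeDecomp} to write the columns of $\fS\fD$ explicitly as $\sigmaS_j\fs_j$ for $1\leq j\leq p$, as $\fs_{p+i}$ for $1 \leq i\leq q$, as $\sigmaS_i\fs_{n+i}$ for $1\leq i\leq p$, and as zero columns thereafter. Applying $(\I{2n}-\fP)$ column by column and using the splitting from the previous paragraph, every column whose underlying index lies in $\idxSetPSD$ contributes nothing, while an unselected index $i$ contributes $\sigmaS_i^2(\tnorm{\fs_i}^2 + \tnorm{\fs_{n+i}}^2) = (\wsigmaS_i)^2$ for $1\leq i\leq p$ and $\tnorm{\fs_i}^2 = (\wsigmaS_i)^2$ for $p+1\leq i\leq p+q$, matching exactly \cref{eq:FrobSymplSingVal}. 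Summing the surviving terms over $\{1,\dots,p+q\}\setminus\idxSetPSD$ yields \cref{eq:PSDDecay}. The only two points demanding care are that selecting the index $i$ removes the contributions of both $\fs_i$ and $\fs_{n+i}$ at once, and that the columns of $\fS$ killed by the zero blocks of $\fD$ (including $\fs_{n+i}$ for $i>p$) never enter the sum, so they cannot spoil the identification with the weighted symplectic singular values.
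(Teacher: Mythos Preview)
Your proof is correct and follows essentially the same approach as the paper: drop $\fQ$ by orthogonality, use the symplecticity of $\fS$ to see that $\fV\si\fV$ fixes the selected columns of $\fS$ and annihilates the rest, and then read off the surviving contributions from the block structure of $\fD$. The only difference is packaging: the paper encodes your column-by-column computation with a selection matrix $\I{\idxSetPSDtk}$, writing $\fV = \fS\,\I{\idxSetPSDtk}$ and verifying the identity $\TJtk\rT{\I{\idxSetPSDtk}}\Jtn = \rT{\I{\idxSetPSDtk}}$, which collapses the projection to $\fS(\I{2n}-\I{\idxSetPSDtk}\rT{\I{\idxSetPSDtk}})\fD$ in one line before invoking \cref{prop:FrobSymplSingVal}.
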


\begin{proof}
  Let $\fV \in \R^{2n \times 2k}$ be a ROB constructed from an SVD-like decomposition $\Xs = \fS \fD \fQ$ of the snapshot matrix $\Xs \in \R^{2n \times 2k}$ with the procedure described in \cref{def:PSDSVDlike}. Let $p,q \in \N$ be defined as in \cref{lem:SVDlikeDecomp} and $\idxSetPSD = \{i_1, \dots, i_k\} \subset \{1, \dots, p+q\}$ be the set of indices selected with \cref{eq:IdxPSDSVDlike}.
  
  For the proof, we introduce a slightly different notation of the ROB $\fV$. The selection of the columns $\fs_i$ of $\fS$ is denoted with the selection matrix $\I{\idxSetPSDtk} \in \R^{2n \times 2k}$ based on
  \begin{align*}
    &\lb \I{\idxSetPSD} \rb_{\alpha,\beta} =
    \begin{cases}
      1,& \alpha=i_\beta \in \idxSetPSD\\
      0,& \text{else}
    \end{cases}
    &\text{for } \quad
    \begin{split}
      &1\leq \alpha \leq 2n,\\
      &1\leq \beta \leq k,
    \end{split}
    &\I{\idxSetPSDtk} = [\I{\idxSetPSD}, \; \TJtn\I{\idxSetPSD}].
  \end{align*}
  which allows us to write the ROB as the matrix--matrix product $\fV = \fS \I{\idxSetPSDtk}$. Furthermore, we can select the neglected entries with $\I{2n} - \I{\idxSetPSDtk} \rTb{\I{\idxSetPSDtk}}$.

We insert the SVD-like decomposition and the representation of the ROB introduced in the previous paragraph in the PSD which reads
  \begin{align*}
     \Fnorm{(\I{2n} - \fV\si\fV) \Xs}^2
= \Fnorm{(\I{2n} - \fS \I{\idxSetPSDtk} \TJtk \rT{\I{\idxSetPSDtk}} \rT\fS \Jtn) \fS \fD \fQ}^2
= \Big\lVert\fS (\I{2n} - \I{\idxSetPSDtk} \TJtk \rT{\I{\idxSetPSDtk}} \overbrace{\rT\fS \Jtn \fS}^{=\Jtn} ) \fD \Big\rVert^2_{\mathrm{F}}
  \end{align*}
where we use the orthogonality of $\fQ$ and the symplecticity of $\fS$ in the last step. We can reformulate the product of Poisson matrices and the selection matrix as
\begin{align*}
  \TJtk \rT{\I{\idxSetPSDtk}} \Jtn
= 
\TJtk
\begin{bmatrix}
  \rT{\I{\idxSetPSD}}\\
  \rT{\I{\idxSetPSD}}\Jtn
\end{bmatrix}
\Jtn
=
\begin{bmatrix}
  \Z{k}  & -\I{k}\\
  \I{k} & \Z{k}
\end{bmatrix}
\begin{bmatrix}
  \rT{\I{\idxSetPSD}}\Jtn\\
  -\rT{\I{\idxSetPSD}}
\end{bmatrix}
= \rT{\I{\idxSetPSDtk}}.
\end{align*}
Thus, we can further reformulate the PSD as
\begin{align*}
  \Fnorm{(\I{2n} - \fV\si\fV) \Xs}^2
= \Fnorm{\fS \lb \I{2n} - \I{\idxSetPSDtk} \rTb{\I{\idxSetPSDtk}} \rb \fD}^2
= \sum_{i \in \{1, \dots, p+q\} \setminus \idxSetPSD} \lb \wsigmaS_i \rb^2
\end{align*}
where $\wsigmaS_i$ are the weighted symplectic singular values from \cref{eq:FrobSymplSingVal}. In the last step, we use that the resultant diagonal matrix in the braces sets all rows of $\fD$ with indices $i, n+i$ to zero for $i \in \idxSetPSD$. Thus, the last step can be concluded analogously to the proof of \cref{prop:FrobSymplSingVal}.
\end{proof}

A direct consequence of \cref{prop:PSDDecay} is that the decay of the PSD functional is proportional to the decay of the sum over the neglected weighted symplectic singular values $\wsigmaS_i$ from \cref{eq:FrobSymplSingVal}. In the numerical example \cref{subsubsec:NumExp:projErr}, we observe an exponential decrease of this quantities which induces an exponential decay of the PSD functional.

\begin{Remark}[Computation of the SVD-like decomposition]\label{rem:CompSVDlikeDecomp}
To compute an SVD-like decompostion \cref{eq:SVDlikeDecomp} of $\fB$, several approaches exist. The original paper \cite{Xu2003} derives a decomposition based on the product $\rT\fB \Jtn \fB$ which is not good for a numerical computation since errors can arise from cancellation. In \cite{Xu2005}, an implicit version is presented that does not require the computation of the full product $\rT\fB \Jtn \fB$ but derives the decomposition implicitly by transforming $\fB$. Furthermore, \cite{Agoujil2018} introduces an iterative approach to compute an SVD-like decomposition which computes parts of an SVD-like decomposition with a block-power iterative method. In the present case, we use the implicit approach \cite{Xu2005}.
\end{Remark}

\subsection{Interplay of non-orthonormal and orthonormal ROBs}
We give further results on the interplay of non-orthonormal and orthonormal ROBs. The fundamental statement in the current section is the Orthogonal SR decomposition \cite{Bunse1986,Xu2003}.

\begin{Proposition}[Orthogonal SR decomposition] \label{prop:OrthoSRDecomp}
For each matrix $\fB \in \R^{2n \times m}$ with $m \leq n$, there exists a symplectic, orthogonal matrix $\fS \in \R^{2n \times 2n}$, an upper triangular matrix $\fR_{11} \in \R^{m \times m}$ and a strictly upper triangular matrix $\fR_{21} \in \R^{m \times m}$ such that
\begin{align*}
  &\fB = \fS \begin{bmatrix} \fR_{11}\\ \Z{(n-m) \times m}\\ \fR_{21}\\ \Z{(n-m) \times m} \end{bmatrix}
= [\fS_m \quad \TJtn \fS_m] \begin{bmatrix}  \fR_{11}\\ \fR_{21} \end{bmatrix},&
  &\begin{split}
    \fS_i =&\; [\fs_1, \dots, \fs_i], \quad 1\leq i\leq n,\\
    \fS =&\; [\fs_1, \dots, \fs_n, \TJtn \fs_1, \dots, \TJtn \fs_n].
  \end{split}
\end{align*}
\end{Proposition}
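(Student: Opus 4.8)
The plan is to prove the statement constructively, by a symplectic analogue of the Gram--Schmidt/QR process that builds the columns $\fs_1, \dots, \fs_n$ of $\fS$ one at a time. By \cref{lem:SymplOrthonROB} applied with $2k = 2n$, a square matrix $\fS = [\fS_n \quad \TJtn\fS_n]$ with $\fS_n = [\fs_1, \dots, \fs_n]$ is simultaneously symplectic and orthogonal exactly when $\rT\fS_n \fS_n = \I{n}$ and $\rT\fS_n \Jtn \fS_n = \Z{n}$, i.e.\ when its first $n$ columns form an orthonormal, isotropic system. Expanding the claimed product and using that the last $n-m$ columns of $\fS_n$ and $\TJtn\fS_n$ meet only zero blocks of the middle factor shows that the assertion is equivalent to writing each column $\fb_j$ of $\fB$ as $\fb_j = \sum_{l=1}^{j} (\fR_{11})_{lj}\, \fs_l + \sum_{l=1}^{j-1} (\fR_{21})_{lj}\, \TJtn\fs_l$; thus the upper-triangularity of $\fR_{11}$ and the strict upper-triangularity of $\fR_{21}$ amount to requiring $\fb_j \in \mspan\{\fs_1, \dots, \fs_j, \TJtn\fs_1, \dots, \TJtn\fs_{j-1}\}$.

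First I would run an induction on $j = 1, \dots, m$. Assuming $\fs_1, \dots, \fs_{j-1}$ already form an orthonormal, isotropic system, the augmented family $\{\fs_1, \dots, \fs_{j-1}, \TJtn\fs_1, \dots, \TJtn\fs_{j-1}\}$ is itself orthonormal, since $\rT{(\TJtn\fs_i)}(\TJtn\fs_l) = \rT\fs_i \Jtn\TJtn \fs_l = \rT\fs_i \fs_l$ and $\rT\fs_i(\TJtn\fs_l) = -\rT\fs_i\Jtn\fs_l$ by $\TJtn = -\Jtn$ together with \cref{eq:StructMat}. I set the entries of $\fR_{11}$ and $\fR_{21}$ in rows $l < j$ to the inner products of $\fb_j$ against these orthonormal vectors, form the residual $\fw_j$ by subtracting the corresponding projections, and put $\fs_j = \fw_j / \tnorm{\fw_j}$ with $(\fR_{11})_{jj} = \tnorm{\fw_j}$. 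Two facts then fall out for free: orthogonality of $\fw_j$ to each $\TJtn\fs_l$ forces $\rT\fs_j\Jtn\fs_l = 0$, so the enlarged system stays isotropic; and because $\rT\fs_j(\TJtn\fs_j) = -\rT\fs_j\Jtn\fs_j = 0$ by \cref{eq:StructMat}, the coefficient of $\TJtn\fs_j$ in $\fb_j$ vanishes, which is precisely why $\fR_{21}$ has zero diagonal, hence is strictly upper triangular.

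The step I expect to be the main obstacle is the rank-deficient case $\fw_j = \fzero$, where normalisation breaks down. Here $\fb_j$ already lies in the span of the $2(j-1)$ previously constructed vectors, so I would set $(\fR_{11})_{jj} = 0$ (allowed, since upper-triangular permits a zero diagonal) and choose $\fs_j$ to be any unit vector orthogonal to $\mspan\{\fs_1, \dots, \fs_{j-1}, \TJtn\fs_1, \dots, \TJtn\fs_{j-1}\}$; such a vector exists because $m \leq n$ gives $2(j-1) < 2n$, so this span is a proper subspace of $\R^{2n}$, and the identities above show the augmented frame remains orthonormal and isotropic. This is the genuinely symplectic completion argument — one must verify that an orthonormal, isotropic partial frame always extends — and it is the delicate point of the proof. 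After exhausting all $m$ columns I extend $\fs_1, \dots, \fs_m$ to a full orthonormal, isotropic family $\fs_1, \dots, \fs_n$ by the same device (now with no constraint from $\fB$) and set $\fS = [\fS_n \quad \TJtn\fS_n]$; the extra columns are multiplied only by the zero blocks of the middle factor and so do not alter $\fB$. By \cref{lem:SymplOrthonROB}, $\fS$ is then symplectic and orthogonal, which closes the argument.

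As an independent cross-check I would note that the entire construction is the image, under the real representation sending a complex vector $\fa + \imagUnit\fb$ to the stacked column $\rTb{\rTb{\fa}\; \rTb{\fb}} \in \R^{2n}$, of an ordinary unitary QR decomposition of the complex $n \times m$ matrix built from the top and bottom $n \times m$ blocks of $\fB$. Under this map, multiplication by $\imagUnit$ becomes application of $\TJtn$, a unitary factor becomes a symplectic orthogonal $\fS$ by \cref{lem:SymplOrthonROB}, and the real and imaginary parts of the upper-triangular complex factor (whose diagonal may be taken real) become $\fR_{11}$ and the strictly upper-triangular $\fR_{21}$; this recovers exactly the stated factorisation and confirms the structure of the two triangular factors.
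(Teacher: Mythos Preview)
Your proof is correct and takes a genuinely different route from the paper's. The paper first applies an ordinary QR decomposition $\fB = \fQ\bigl[\begin{smallmatrix}\fR\\ \Z{}\end{smallmatrix}\bigr]$, then invokes the known \emph{square} Orthogonal SR decomposition of $\fQ \in \R^{2n\times 2n}$ from Bunse-Gerstner (1986) to write $\fQ = \fS\,\fRs$ with $\fS$ symplectic orthogonal and $\fRs$ carrying the required triangular block pattern, and simply multiplies back; the rectangular statement thus falls out of the square one together with the observation that right-multiplication by the upper-triangular $\bigl[\begin{smallmatrix}\fR\\ \Z{}\end{smallmatrix}\bigr]$ preserves that pattern. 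You instead build $\fS$ directly by a symplectic Gram--Schmidt on the columns of $\fB$, handling the rank-deficient step by orthogonal completion, so your argument is self-contained and does not cite the square SR result as a black box. The complex-QR cross-check you sketch at the end is essentially a third, independent proof and arguably the slickest, since it explains in one stroke why $\fR_{21}$ has zero diagonal (the diagonal of the complex $R$-factor may be taken real). What the paper's route buys is brevity and a clean reduction to existing literature; what yours buys is an explicit constructive algorithm and the insight that the hypothesis $m \le n$ enters exactly as the dimension count $2(j-1) < 2n$ needed to extend an orthonormal isotropic partial frame.
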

We remark that a similar result can be derived for the case $m > n$ \cite{Xu2003} but it is not introduced since we do not need it in the following.

\begin{proof}
Let $\fB \in \R^{2n \times m}$ with $m \leq n$. We consider the QR decomposition
\begin{align*}
  \fB = \fQ \begin{bmatrix} \fR\\ \Z{(2n-m) \times m} \end{bmatrix}
\end{align*}
where $\fQ \in \R^{2n \times 2n}$ is an orthogonal matrix and $\fR \in \R^{2n \times m}$ is upper triangular. The original Orthogonal SR decomposition \cite[Corollary 4.5.]{Bunse1986} for the square matrix states that we can decompose $\fQ \in \R^{2n \times 2n}$ as a symplectic, orthogonal matrix $\fS \in \R^{2n \times 2n}$, an upper triangular matrix $\fRs_{11} \in \R^{n \times n}$, a strictly upper triangular matrix $\fRs_{21} \in \R^{n \times n}$ and two (possibly) full matrices $\fRs_{12}, \fRs_{22} \in \R^{n \times n}$
\begin{align*}
  &\fQ = \fS \begin{bmatrix} \fRs_{11} & \fRs_{12}\\ \fRs_{21} & \fRs_{22} \end{bmatrix}&
&\text{and thus}&
&\fB
= \fS \begin{bmatrix} \fRs_{11} & \fRs_{12}\\ \fRs_{21} & \fRs_{22} \end{bmatrix} \begin{bmatrix} \fR\\ \Z{(2n-m) \times m} \end{bmatrix}
= \fS \begin{bmatrix} \fRs_{11}\\ \fRs_{21}\end{bmatrix} \begin{bmatrix} \fR\\ \Z{(n-m) \times m} \end{bmatrix}.
\end{align*}
Since $\fR$ is upper triangular, it does preserve the (strictly) upper triangular pattern in $\fRs_{11}$ and $\fRs_{21}$ and we obtain the (strictly) upper triangular matrices $\fR_{11}, \fR_{21} \in \R^{m \times m}$ from
\begin{align*}
 \begin{bmatrix} \fR_{11}\\ \Z{(n-m) \times m}\\ \fR_{21}\\ \Z{(n-m) \times m} \end{bmatrix}
= \begin{bmatrix} \fRs_{11}\\ \fRs_{21}\end{bmatrix} \begin{bmatrix} \fR\\ \Z{(n-m) \times m} \end{bmatrix}.
\end{align*}
\end{proof}

Based on the Orthogonal SR decomposition, the following two propositions prove bounds for the projection errors of PSD which allows an estimate for the quality of the respective method. In both cases we require the basis size to satisfy $k \leq n$ or $2k \leq n$, respectively. This restriction is not limiting in the context of symplectic MOR as in all application cases $k \ll n$.

\begin{Proposition}\label{prop:EstimatePODoPSD}
Let $\fV \in \R^{2n \times k}$ be a minimizer of POD with $k \leq n$ basis vectors and $\fVE \in \R^{2n \times 2k}$ be a minimizer of the PSD in the class of orthonormal, symplectic matrices with $2k$ basis vectors. Then, the orthogonal projection errors of $\fVE$ and $\fV$ satisfy
\begin{align*}
  \Fnorm{(\I{2n} - \fVE \rT\fVE) \Xs}^2 \leq \Fnorm{\lb \I{2n} - \fV \rT\fV \rb \Xs}^2.
\end{align*}
\end{Proposition}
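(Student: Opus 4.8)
The plan is to exhibit a \emph{feasible} orthosymplectic basis with $2k$ columns whose orthogonal projection error is no larger than that of the $k$-column POD basis $\fV$, and then to close the argument by optimality of $\fVE$. The key idea is that a $k$-column POD basis can be embedded into a $2k$-column orthosymplectic basis with a strictly larger column space, and the Orthogonal SR decomposition (\cref{prop:OrthoSRDecomp}) is exactly the tool that produces such an embedding.

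First I would apply \cref{prop:OrthoSRDecomp} to the POD basis $\fV \in \R^{2n \times k}$; this is admissible precisely because $k \leq n$. It yields a symplectic, orthogonal matrix $\fS \in \R^{2n \times 2n}$ together with $\fR_{11}, \fR_{21} \in \R^{k \times k}$ such that $\fV = [\fS_k \quad \TJtn \fS_k] \begin{bmatrix} \fR_{11}\\ \fR_{21}\end{bmatrix}$. I then set $\fW := [\fS_k \quad \TJtn \fS_k] \in \R^{2n \times 2k}$. Because the columns of $\fW$ are $2k$ of the columns of the orthogonal matrix $\fS$, they are orthonormal, so $\rT\fW \fW = \I{2k}$; and since $\rT\fS \Jtn \fS = \Jtn$ forces its top-left block $\rT\fS_k \Jtn \fS_k = \Z{k}$, the choice $\fE = \fS_k$ satisfies the conditions in \cref{eq:SymplOrthonROB}. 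By \cref{lem:SymplOrthonROB} the matrix $\fW$ is therefore an orthonormal, symplectic ROB, i.e.\ a feasible competitor in the class over which $\fVE$ is optimal.

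Next I would read off from the SR factorization that each column of $\fV$ is a linear combination of the columns of $\fW$, hence $\colspanb{\fV} \subseteq \colspanb{\fW}$. Both $\fV \rT\fV$ and $\fW \rT\fW$ are orthogonal projectors (onto the respective column spaces), so by the standard monotonicity of the orthogonal projection error under subspace inclusion---applied column by column to $\Xs$ and summed---I obtain $\Fnorm{(\I{2n} - \fW \rT\fW)\Xs}^2 \leq \Fnorm{(\I{2n} - \fV \rT\fV)\Xs}^2$. Finally, for orthosymplectic matrices the symplectic inverse coincides with the transpose by \cref{lem:SymplOrthonROB}(iii), so the PSD functional of any such basis equals its orthogonal projection error; since $\fVE$ minimizes the PSD over this class and $\fW$ is feasible, $\Fnorm{(\I{2n} - \fVE \rT\fVE)\Xs}^2 \leq \Fnorm{(\I{2n} - \fW \rT\fW)\Xs}^2$. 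Chaining the two inequalities,
\[
  \Fnorm{(\I{2n} - \fVE \rT\fVE)\Xs}^2 \leq \Fnorm{(\I{2n} - \fW \rT\fW)\Xs}^2 \leq \Fnorm{(\I{2n} - \fV \rT\fV)\Xs}^2,
\]
gives the assertion.

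The main obstacle is the very first step: recognizing that the $k$-dimensional POD subspace always sits inside a $2k$-dimensional orthosymplectic subspace, and identifying the Orthogonal SR decomposition as the mechanism that makes $\fW$ both symplectic and orthonormal while enclosing $\colspanb{\fV}$. Once the subspace inclusion $\colspanb{\fV} \subseteq \colspanb{\fW}$ is established, the remaining two inequalities are routine---one is projection monotonicity and the other is the defining optimality of $\fVE$.
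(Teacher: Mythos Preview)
Your argument is correct and follows essentially the same route as the paper: apply the Orthogonal SR decomposition (\cref{prop:OrthoSRDecomp}) to embed $\colspanb{\fV}$ into the column space of an orthosymplectic matrix of width $2k$, use monotonicity of the orthogonal projection error under subspace inclusion, and then invoke optimality of $\fVE$ together with $\si\fVE = \rT\fVE$. The paper writes the monotonicity step out explicitly via $(\I{2n}-\fS\rT\fS)=(\I{2n}-\fS\rT\fS)(\I{2n}-\fV\rT\fV)$ and $\tnorm{\I{2n}-\fS\rT\fS}\leq 1$, whereas you cite it as a standard fact; otherwise the proofs coincide.
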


\begin{proof}
The Orthogonal SR decomposition (see \cref{prop:OrthoSRDecomp}) guarantees that a symplectic, orthogonal matrix $\fS \in \R^{2n \times 2k}$ and $\fR \in \R^{2k \times k}$ exist with $\fV = \fS \fR$. Since both matrices $\fV$ and $\fS$ are orthogonal and $\img(\fV) \subset \img(\fS)$, we can show that $\fS$ yields a lower projection error than $\fV$ with
\begin{align*}
  \Fnorm{\lb \I{2n} - \fS \rT\fS \rb \Xs}^2
  =&\; \Fnorm{\lb\I{2n} - \fS \rT\fS\rb \lb\I{2n} - \fV \rT\fV\rb \Xs}^2
  = \sum_{i=1}^{\ns} \tnorm{\lb\I{2n} - \fS \rT\fS\rb \lb\I{2n} - \fV \rT\fV\rb \xs_i}^2\\
  \leq&\; \underbrace{\tnorm{\I{2n} - \fS \rT\fS}^2}_{\leq 1} \; \sum_{i=1}^{\ns} \tnorm{\lb\I{2n} - \fV \rT\fV\rb \xs_i}^2
  \leq  \Fnorm{\lb\I{2n} - \fV \rT\fV\rb \Xs}^2
\end{align*}

Let $\fVE \in \R^{2n \times 2k}$ be a minimizer of the PSD in the class of symplectic, orthonormal ROBs. By definition of $\fVE$, it yields a lower projection error than $\fS$. Since both ROBs are symplectic, orthonormal, we can exchange the symplectic inverse with the transposition (see \cref{lem:SymplOrthonROB}, (iii)). This proves the assertion with
\begin{align*}
  \Fnorm{\lb \I{2n} - \fV \rT\fV \rb \Xs}^2
\geq \Fnorm{\lb\I{2n} - \fS \rT\fS\rb \Xs}^2
\geq \Fnorm{\lb \I{2n} - \fVE \rT\fVE\rb \Xs}^2.
\end{align*}
\end{proof}

\cref{prop:EstimatePODoPSD} proves that we require at most twice the number of basis vectors to generate a symplectic, orthonormal basis with an orthogonal projection error at least as small as the one of the classical POD. An analogous result can be derived in the framework of a symplectic projection which is proven in the following proposition.

\begin{Proposition} \label{prop:EstimateoPSDPSD}
Assume there exists a minimizer $\fV \in \R^{2n \times 2k}$ of the general PSD for a basis size $2k \leq n$ with potentially non-orthonormal columns. Let $\fVE \in \R^{2n \times 4k}$ be a minimizer of the PSD in the class of symplectic, orthogonal bases of size $4k$. Then, we know that the symplectic projection error of $\fVE$ is less than or equal to the one of $\fV$, i.e.\
\begin{align*}
  \Fnorm{(\I{2n} - \fVE \si\fVE) \Xs}^2 \leq \Fnorm{(\I{2n} - \fV \si\fV) \Xs}^2.
\end{align*}
\end{Proposition}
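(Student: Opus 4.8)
The plan is to mirror the argument of \cref{prop:EstimatePODoPSD}, replacing the orthogonal projection by the symplectic projection. The key idea is to manufacture, from the given (possibly non-orthonormal) symplectic minimizer $\fV \in \R^{2n \times 2k}$, a symplectic, orthonormal ROB of size $4k$ whose column span contains $\imgb\fV$, and to verify that its symplectic projection error does not exceed that of $\fV$. The optimality of $\fVE$ over the class of symplectic, orthonormal bases of size $4k$ then closes the chain of inequalities.

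To build the enlarged basis, I would apply the Orthogonal SR decomposition (\cref{prop:OrthoSRDecomp}) to $\fV$, which is admissible since $\fV$ has $m = 2k \leq n$ columns. This yields a symplectic, orthogonal $\fS \in \R^{2n \times 2n}$ and (strictly) upper triangular $\fR_{11}, \fR_{21} \in \R^{2k \times 2k}$ with $\fV = [\fS_{2k} \quad \TJtn \fS_{2k}] \begin{bmatrix} \fR_{11}\\ \fR_{21} \end{bmatrix}$. Setting $\fVs := [\fS_{2k} \quad \TJtn \fS_{2k}] \in \R^{2n \times 4k}$, this matrix has the form $[\fE \quad \TJtn \fE]$ with $\fE = \fS_{2k}$, where $\rT\fE \fE = \I{2k}$ and $\rT\fE \Jtn \fE = \Z{2k}$ (these are the upper-left $2k\times 2k$ blocks of the corresponding identities satisfied by the symplectic, orthogonal matrix $\fS$). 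Hence, by \cref{lem:SymplOrthonROB}, $\fVs$ is a symplectic, orthonormal ROB of size $4k$, and by construction $\imgb\fV \subseteq \imgb\fVs$.

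The central step is the projection-error comparison. Since $\fVs$ is symplectic, orthonormal, \cref{lem:SymplOrthonROB} (iii) gives $\si\fVs = \rT\fVs$, so the symplectic projection $\fVs\si\fVs = \fVs\rT\fVs$ coincides with the orthogonal projection onto $\imgb\fVs$. In particular $\tnorm{\I{2n} - \fVs\si\fVs} \leq 1$, and $\fVs\si\fVs$ fixes every column of $\fV$, i.e.\ $\fVs\si\fVs\fV = \fV$ (using $\rT\fVs\fVs = \I{4k}$). Consequently $\fVs\si\fVs \cdot \fV\si\fV = \fV\si\fV$, which yields the identity $(\I{2n} - \fVs\si\fVs)(\I{2n} - \fV\si\fV) = \I{2n} - \fVs\si\fVs$, and therefore
\begin{align*}
  \Fnorm{(\I{2n} - \fVs\si\fVs)\Xs}
  &= \Fnorm{(\I{2n} - \fVs\si\fVs)(\I{2n} - \fV\si\fV)\Xs}\\
  &\leq \tnorm{\I{2n} - \fVs\si\fVs}\,\Fnorm{(\I{2n} - \fV\si\fV)\Xs}
  \leq \Fnorm{(\I{2n} - \fV\si\fV)\Xs}.
\end{align*}
Finally, since $\fVs$ belongs to the class of symplectic, orthonormal bases of size $4k$ over which $\fVE$ minimizes, we have $\Fnorm{(\I{2n} - \fVE\si\fVE)\Xs} \leq \Fnorm{(\I{2n} - \fVs\si\fVs)\Xs}$, and combining with the display gives the claim.

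I expect the main obstacle to lie in the third paragraph: one must recognize that orthonormality collapses the a priori oblique symplectic projection $\fVs\si\fVs$ into an \emph{orthogonal} projection, which is precisely what makes both the operator-norm bound $\tnorm{\I{2n} - \fVs\si\fVs} \leq 1$ and the sub-projection identity $\fVs\si\fVs\fV = \fV$ simultaneously available. For the oblique projection $\fV\si\fV$ itself no such norm bound holds, so the enlargement to a symplectic, orthonormal basis (doubling the basis size via the SR decomposition) is not a convenience but is essential to the estimate.
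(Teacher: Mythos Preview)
Your proposal is correct and follows essentially the same route as the paper: apply the Orthogonal SR decomposition to $\fV$ to obtain a symplectic, orthonormal $\fVs=[\fS_{2k}\ \TJtn\fS_{2k}]\in\R^{2n\times 4k}$ with $\fV=\fVs\fR$, establish the identity $(\I{2n}-\fVs\si\fVs)(\I{2n}-\fV\si\fV)=\I{2n}-\fVs\si\fVs$, bound via $\tnorm{\I{2n}-\fVs\si\fVs}\leq 1$, and invoke optimality of $\fVE$. Your justification of the key identity via $\si\fVs=\rT\fVs$ and $\fVs\rT\fVs\fV=\fV$ is a slightly cleaner rephrasing of the paper's direct expansion, but the argument is the same.
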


\begin{proof}
  Let $\fV \in \R^{2n \times 2k}$ be a minimizer of PSD with $2k \leq n$. By \cref{prop:OrthoSRDecomp}, we can determine a symplectic, orthogonal matrix $\fS \in \R^{2n \times 4k}$ and $\fR \in \R^{4k \times 2k}$ with $\fV = \fS \fR$. Similar to the proof of \cref{prop:EstimatePODoPSD}, we can bound the projection errors. We require the identity
  \begin{align*}
    (\I{2n} - \fS \si\fS) (\I{2n} - \fV \si\fV)
= \I{2n} - \fS \si\fS - \fV \si\fV + \underbrace{\fS \overbrace{\si\fS \fS}^{=\I{4k}} \fR}_{=\fV} \underbrace{\TJtk \rT\fR \rT\fS \Jtn}_{=\si\fV}
= \I{2n} - \fS \si\fS.
  \end{align*}
  With this identity, we proceed analogously to the proof of \cref{prop:EstimatePODoPSD} and derive for a minimizer $\fVE \in \R^{2n \times 4k}$ of PSD in the class of symplectic, orthonormal ROBs
  \begin{align*}
    \Fnorm{(\I{2n} - \fVE \si\fVE) \Xs}^2
    \leq&\; \Fnorm{(\I{2n} - \fS \si\fS) \Xs}^2
= \Fnorm{(\I{2n} - \fS \si\fS) (\I{2n} - \fV \si\fV) \Xs}^2\\
\leq&\; \underbrace{\tnorm{(\I{2n} - \fS \si\fS)}^2}_{\leq 1} \Fnorm{(\I{2n} - \fV \si\fV) \Xs}^2
\leq \Fnorm{(\I{2n} - \fV \si\fV) \Xs}^2.
  \end{align*}
\end{proof}

\cref{prop:EstimateoPSDPSD} proves that we require at most twice the number of basis vectors to generate a symplectic, orthonormal basis with a symplectic projection error at least as small as the one of a (potentially non-orthonormal) minimizer of PSD.

\section{Numerical results}
\label{sec:NumRes}

The numerical experiments in the present paper are based on a two-dimensional plane strain linear elasticity model which is described by a Lam\'e--Navier equation
\begin{align*}
	\rhoz \sodeldel{t^2}{\fu(\fxi, t, \fmu)} - \lamemu \, \Delta_{\fxi} \fu(\fxi, t, \fmu) + (\lamelambda + \lamemu) \grad[\fxi] \lb \divb[\fxi]{\fu(\fxi, t, \fmu)} \rb = \rhoz \, \fg(\fxi, t).
\end{align*}
for $\fxi \in \domain \subset \R^2$ and $t \in \ftInterval$ with the density $\rhoz \in \Rpos$, the Lam\'e constants $ \fmu = (\lamelambda, \lamemu) \in \Rpos^2$, the external force $\fg: \domain \times \ftInterval \rightarrow \R^2$ and Dirichlet boundary conditions on $\boundaryDirichlet \subset \boundary := \partial\varOmega$ and Neumann boundary conditions on $\boundaryNeumann \subset \boundary$. We apply non-dimensionalization (e.g.\ \cite[Chapter 4.1]{Langtangen2016}), apply the Finite Element Method (FEM) with first-order Lagrangian elements on a triangular mesh and rewrite the system as first-order system to arrive at the parametric linear system \cref{eq:LinSys} with
\begin{align}\label{eq:HamLinElast}
  &\fx(t, \fmu) = \begin{bmatrix} \fq(t, \fmu)\\ \fp(t, \fmu) \end{bmatrix},&
  &\fH(\fmu) = \begin{bmatrix} \fK(\fmu) & \Z{n}\\ \Z{n} & \inv\fM \end{bmatrix},&
&\fh(t) = \begin{bmatrix} -\ff(t)\\ \Z{n \times 1} \end{bmatrix}
\end{align}
where $\fq(t, \fmu) \in \R^n$ is the vector of displacement DOFs, $\fp(t, \fmu) \in \R^n$ is the vector of linear momentum DOFs, $\fK(\fmu) \in \R^{n \times n}$ is the stiffness matrix, $\inv\fM \in \R^{n \times n}$ is the inverse of the mass matrix and $\ff(t, \fmu)$ is the vector of external forces.

We remark that a Hamiltonian formulation with the velocity DOFs $\fv(t) = \dd{t} \fx(t) \in \R^n$ instead of the linear momentum DOFs $\fp(t)$ is possible if a non-canonical symplectic structure is used. Nevertheless, in \cite[Remark 3.8.]{Peng2016} it is suggested to switch to a formulation with a canonical symplectic structure for the MOR of Hamiltonian systems.

In order to solve the system \cref{eq:HamLinElast} numerically with a time-discrete approximation $\fx_i(\fmu) \approx \fx(t_i, \fmu)$ for each of $\nt \in \N$ time steps $t_i \in \ftInterval$, $1\leq i\leq \nt$, a numerical integrator is required. The preservation of the symplectic structure in the time-discrete system requires a so-called symplectic integrator \cite{Hairer2006,Bhatt2017}. In the context of our work, the implicit midpoint scheme is used in all cases.

\begin{Remark}[Modified Hamiltonian]\label{rem:ModHam}
We remark that, even though the symplectic structure is preserved by symplectic integrators, the Hamiltonian may be modified in the time-discrete system compared to the original Hamiltonian. In the case of a quadratic Hamiltonian (see \cref{cor:QuadHam}) and a symplectic Runge-Kutta integrator, the modified Hamiltonian equals the original Hamiltonian since these integrators preserve quadratic first integrals. For further details, we refer to \cite[Chapter IX.]{Hairer2006} or \cite[Section 5.1.2 and 5.2]{Leimkuhler2005}.
\end{Remark}

The model parameters are the first and second Lam\'e constants with $\fmu = (\lamelambda, \lamemu) \in \paramSet = [35 \cdot 10^{9}, 125 \cdot 10^{9}] \;\txtfrac{\usiN}{\usim^2} \times [35 \cdot 10^{9}, 83 \cdot 10^{9}] \;\txtfrac{\usiN}{\usim^2}$ which varies between cast iron and steel with approx.\ $12\%$ chromium \cite[App.\ E 1 Table 1]{Dubbel2014ChapE}. The density is set to $\rhoz = 7856 \;\txtfrac{\usikg}{\usim^3}$. The non-dimensionalization constants are set to $\nondim\lamelambda = \nondim\lamemu = 81 \cdot 10^{9} \;\txtfrac{\usiN}{\usim^2}$, $\nondim\xi = 1 \;\usim$, $\nondim{g} = 9.81\;\txtfrac{\usim}{\usis^2}$. The geometry is a simple cantilever beam clamped on the left side with a force applied to the right boundary. The time interval is chosen to be $t \in [\tInit, \tEnd]$ with $\tInit = 0 \,\usis$ and $\tEnd = 7.2 \cdot 10^{-2} \,\usis$ which is one oscillation of the beam. For the numerical integration $\nt = 151$ time steps are used.

\begin{figure}[h!]
\centering
\includegraphics[scale=1.0]{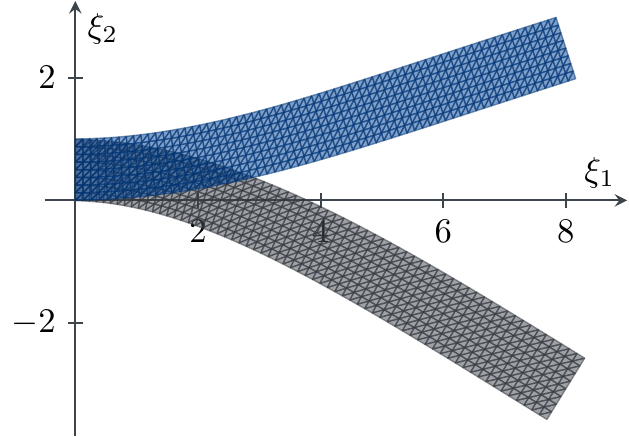}
\caption{an exaggerated illustration of the displacements $\fq(t, \fmu)$ of the non-autonomous beam model (a)~at the time  with the maximum displacement (gray) and (b)~at the final time (blue).}
\end{figure}

The symplectic MOR techniques examined are PSD Complex SVD (\cref{def:ComplexSVD}), the greedy procedure \cite{Maboudi2017} and the newly introduced PSD SVD-like decomposition (\cref{def:PSDSVDlike}). The MOR techniques that do not necessarily derive a symplectic ROB are called non-symplectic MOR techniques in the following. The non-symplectic MOR techniques investigated in the scope of our numerical results are the POD applied to the full state $\fx(t, \fmu)$ (POD full state) and a POD applied to the displacement $\fq(t, \fmu)$ and linear momentum states $\fp(t, \fmu)$ separately (POD separate states). To summarize the basis generation methods, let us enlist them in \Cref{tab:basis_gen} where $\prcSVD(\bullet)$ and $\prcCSVD(\bullet)$ denote the SVD and the complex SVD, respectively.

\begin{table}[htbp]
\centering
{\renewcommand{\arraystretch}{1.5}
\begin{tabular}{p{2.2cm}p{5.6cm}lcc}
\hline\noalign{\smallskip}
method & solution & solution procedure & ortho- & sympl. \\[-.6em]
&&&norm.\\
\hline\noalign{\smallskip}
POD full &
  $\fV_k = \fU(:,1:k)$ & $\fU = \prcSVD(\Xs)$ &
  \cmark &
  \xmark \\[1em]
POD separate &
  \multirow{2}{*}{$\fV_k = \begin{bmatrix} \fU_{\fp}(:,1:k)\\ \fU_{\fq}(:,1:k) \end{bmatrix}$} &
  $\fU_{\fp} = \prcSVD \lb [\fp_1, \dots, \fp_{\ns}] \rb$ &
  \cmark & 
  \xmark \\
&& $\fU_{\fq} = \prcSVD \lb [\fq_1, \dots, \fq_{\ns}] \rb$\\[1em]
PSD cSVD &
  $\fV_{2k} = [\fE(:,1:k) \quad \TJtn \fE(:,1:k)]$ &
  $\fE = \begin{bmatrix}\fPhi \\ \fPsi \end{bmatrix}, \fPhi +\imagUnit\fPsi = \prcCSVD\lb \Cs \rb$ &
  \cmark &
  \cmark\\
&& $\Cs = [\fp_1 + \imagUnit \fq_1, \dots, \fp_{\ns} + \imagUnit \fq_{\ns}]$\\[1em]
PSD greedy &
  $\fV_{2k} = [\fE(:,1:k) \quad \TJtn \fE(:,1:k)]$ &
  $\fE$ from greedy algorithm &
  \cmark &
  \cmark\\[1em]
PSD SVD-like &
  $\fV_{2k} = [\fs_{i_1}, \dots, \fs_{i_k}, \fs_{n+i_1}, \dots, \fs_{n+i_k}]$ &
  $\fS = [\fs_1, \dots, \fs_{2n}]$ from \cref{eq:SVDlikeDecomp}, &
  \xmark &
  \cmark\\
&& $\idxSetPSD = \{i_1, \dots, i_k\}$ from \cref{eq:IdxPSDSVDlike}\\
\hline\noalign{\smallskip}
\end{tabular}
}
\caption{Basis generation methods used in the numerical experiments in summary, where we use the MATLAB\textsuperscript{\textregistered} notation to denote the selection of the first $k$ columns of a matrix e.g.\ in $\fU(:,1:k)$.}
\label{tab:basis_gen}
\end{table}

All presented experiments are generalization experiments, i.e.\ we choose $9$ different training parameter vectors $\fmu \in \paramSet$ on a regular grid to generate the snapshots and evaluate the reduced models for $16$ random parameter vectors that are distinct from the $9$ training parameter vectors. Thus, the number of snapshots is $\ns = 9 \cdot 151 = 1359$. The size $2k$ of the ROB $\fV$ is varied in steps of $20$ with $2k \in \lcb 20, 40, \dots, 280, 300 \rcb$.

The software used for the numerical experiments is RBmatlab\footnote{https://www.morepas.org/software/rbmatlab/} which is an open-source library based on the proprietary software package MATLAB\textsuperscript{\textregistered} and contains several reduced simulation approaches. An add-on to RBmatlab is provided\footnote{https://doi.org/10.5281/zenodo.2578078} which includes all the additional code to reproduce the results of the present paper. The versions used in the present paper are RBmatlab 1.16.09 and MATLAB\textsuperscript{\textregistered} 2017a.

\subsection{Autonomous beam model}
In the first model, we load the beam on the free end (far right) with a constant force which induces an oscillation.  Due to the constant force, the discretized system can be formulated as an autonomous Hamiltonian system. Thus, the Hamiltonian is constant and its preservation in the reduced models can be analysed. All other reduction results are very similar to the non-autonomous case and thus, are exclusively presented for the non-autonomous case in the following \cref{sec:NonAutoBeam}.

\subsubsection{Preservation over time of the modified Hamiltonian in the reduced model}

In the following, we investigate the preservation of the Hamiltonian of our reduced models. With respect to \cref{rem:ModHam}, we mean the preservation over time of the modified Hamiltonian. Since the Hamiltonian is quadratic in our example and the implicit midpoint is a symplectic Runge-Kutta integrator, the modified Hamiltonian equals the original which is why we speak of ``the Hamiltonian'' in the following.

We present in \cref{fig:NumRes:BeamAuto:Ex2:Hamiltonian} the count of the total $240$ simulations which show a preservation (over time) of the reduced Hamiltonian in the reduced model. The solution $\fxr$ of a reduced simulation preserves the reduced Hamiltonian over time if $\txtfrac{\lb \Hamr(\fxr(t_i), \fmu) - \Hamr(\fxr(t_0), \fmu) \rb}{\HamRel(\fmu)} < 10^{-10}$ for all discrete times $t_i \in [\tInit, \tEnd]$, $1\leq i\leq \nt$ where $\HamRel(\fmu) > 0$ is a parameter-dependent normalization factor. The heat map shows that no simulation in the non-symplectic case preserves the Hamiltonian whereas the symplectic methods all preserve the Hamiltonian which is what was expected from theory.

In \cref{fig:NumRes:BeamAuto:Ex2:HamExample}, we exemplify the non-constant evolution of the reduced Hamiltonian for three non-symplectic bases generated by POD separate states with different basis sizes and one selected test parameter $(\lambda, \mu) \in \paramSet$. It shows that in all three cases, the Hamiltonian starts to grow exponentially.

\begin{figure}[h]
\begin{minipage}{.47\textwidth}
\centering
\includegraphics[scale=1.0]{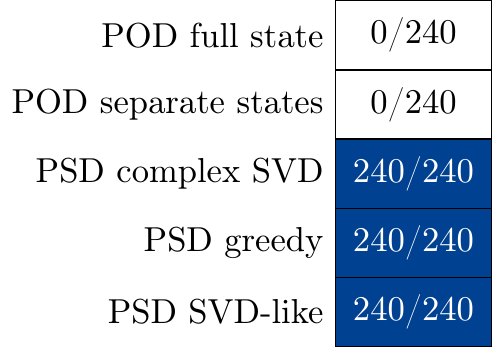}
\caption{Heat map which shows the preservation of the reduced Hamiltonian in the reduced model in $x$ of $y$ cases ($x/y$).}
\label{fig:NumRes:BeamAuto:Ex2:Hamiltonian}
\end{minipage}\hfill
\begin{minipage}{.47\textwidth}
\centering
\includegraphics[scale=1.0]{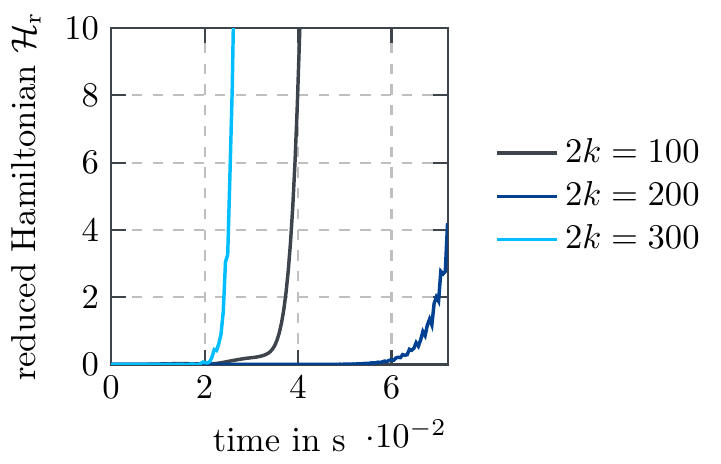}
\caption{Evolution of the reduced Hamiltonian for POD separate states for a selected parameter $(\lambda, \mu) \in \paramSet$.}
\label{fig:NumRes:BeamAuto:Ex2:HamExample}
\end{minipage}
\end{figure}

\subsection{Non-autonomous beam model}
\label{sec:NonAutoBeam}
The second model is similar to the first one. The only difference is that the force on the free (right) end of the beam is loaded with a time-varying force. The force is chosen to act in phase with the beam. The time dependence of the force necessarily requires a non-autonomous formulation which requires in the framework of the Hamiltonian formulation a time-dependent Hamiltonian function which we introduced in \cref{subsec:NonAutoHam}.

We use the model to investigate the quality of the reduction for the considered MOR techniques. To this end, we investigate the projection error, i.e.\ the error on the training data, the orthogonality and symplecticity of the ROB and the error in the reduced model for the test parameters.

\subsubsection{Projection error of the snapshots and singular values}
\label{subsubsec:NumExp:projErr}
The projection error is the error on the training data collected in the snapshot matrix $\Xs$, i.e.\
\begin{align*}
	&\errProj(2k) = \Fnorm{(\I{2n} - \fV \rT\fW) \Xs}^2,&
	\begin{split}
    \text{POD}: \rT\fW =&\; \rT\fV,\\
    \text{PSD}: \rT\fW =&\; \si\fV (= \rT\fV \text {for orthosymplectic ROBs, \cref{lem:SymplOrthonROB}}).
  \end{split}
\end{align*}
It is a measure for the approximation qualities of the ROB based on the training data. \cref{fig:NumRes:BeamNonAuto:Ex3:BasisProps} (left) shows this quantity for the considered MOR techniques and different ROB sizes $2k$. All basis generation techniques show an exponential decay. As expected from theory, POD full state minimizes the projection error for the orthonormal basis generation techniques (see \cref{tab:basis_gen}). PSD SVD-like decomposition shows a lower projection error than the other PSD methods for $2k \geq 80$ and yields a similar projection error for $k \leq 60$. Concluding this experiment, one might expect the full-state POD to yield decent results or even the best results. The following experiments prove this expectation to be wrong.

The decay of (a) the classical singular values $\sigma_i$, (b) the symplectic singular values $\sigmaS_i$ (see \cref{rem:SingVal}) and (c) the weighted symplectic singular values $\wsigmaS_i$ (see \cref{eq:FrobSymplSingVal}) sorted by the magnitude of the symplectic singular values  is displayed in \cref{fig:NumRes:BeamNonAuto:Ex3:BasisProps} (right). All show an exponential decrease. The weighting introduced in \cref{eq:FrobSymplSingVal} for $\wsigmaS_i$ does not influence the exponential decay rate of $\sigmaS_i$. The decrease in the classical singular values is directly linked to the exponential decrease of the projection error of POD full state due to properties of the Frobenius norm (see \cite{LuminyBook2017}). A similar result was deduced in the scope of the present paper for PSD SVD-like decomposition and the PSD functional (see \cref{prop:PSDDecay}).

\begin{figure}[h]
\centering
\includegraphics[scale=1.0]{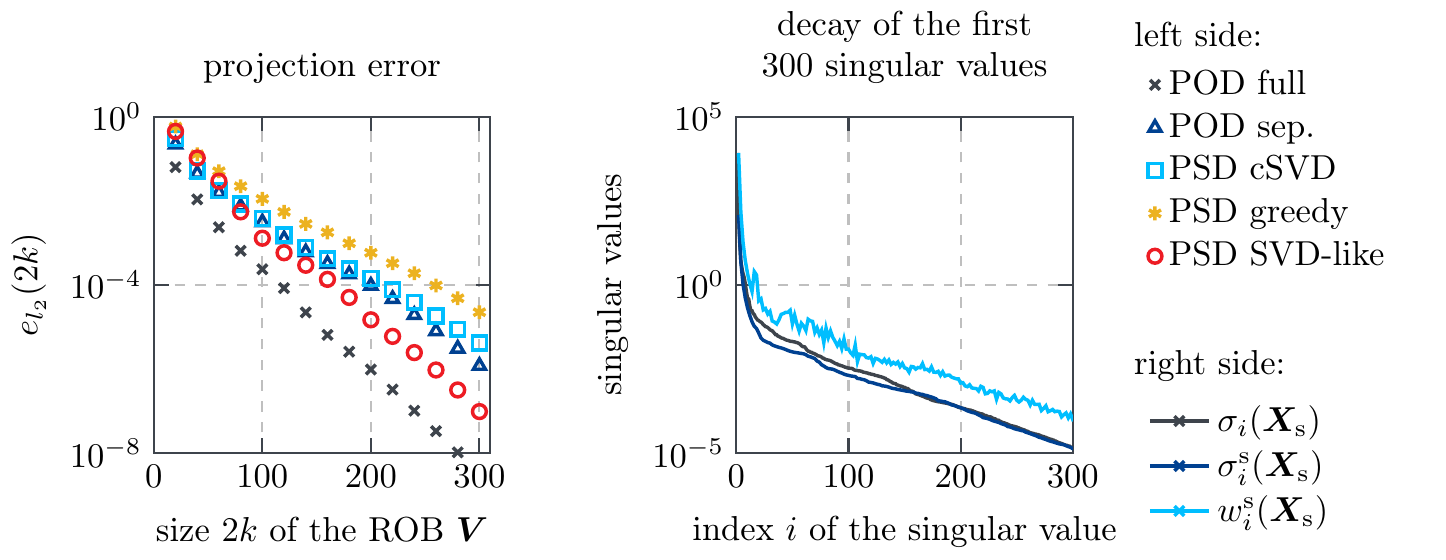}
\caption{Projection error (left) and decay of the singular values from \cref{rem:SingVal} and \cref{eq:FrobSymplSingVal} (right).}
\label{fig:NumRes:BeamNonAuto:Ex3:BasisProps}
\end{figure}

\subsubsection{Orthonormality and symplecticity of the bases}
To verify the orthonormality and the symplecticity numerically, we consider the two functions
\begin{align} \label{eq:ortho}
  &\orthoMeasure(2k) = \Fnorm{\rT\fV \fV - \I{2k}},&
  &\symMeasure(2k) = \Fnorm{\rT\Jtk \rT\fV \Jtn \fV - \I{2k}}&
\end{align}
which are zero / numerically zero if and only if the basis is orthonormal or symplectic, respectively. In \cref{fig:NumRes:BeamAuto:Ex1:Ortho}, we show both values for the considered basis generation techniques and RB sizes.

The orthonormality of the bases is in accordance with the theory. All procedures compute symplectic bases except for PSD SVD like-decomposition. PSD greedy shows minor loss in the orthonormality which is a known issue for the $\Jtn$-orthogonalization method used (modified symplectic Gram-Schmidt procedure with re-orthogonalization \cite{AlAidarous2011}). But no major impact on the reduction results could be attributed to this deficiency in the scope of this paper.

Also the symplecticity (or $\Jtn$-orthogonality) of the bases behaves as expected. All PSD methods generate symplectic bases whereas the POD methods do not. A minor loss of symplecticity is recorded for PSD SVD-like decomposition which is objected to the computational method that is used to compute an SVD-like decomposition. Further research on algorithms for the computation of an SVD-like decomposition should improve this result. Nevertheless, no major impact on the reduction results could be attributed to this deficiency in the scope of this paper.

\begin{figure}[h]
\centering
\includegraphics[scale=1.0]{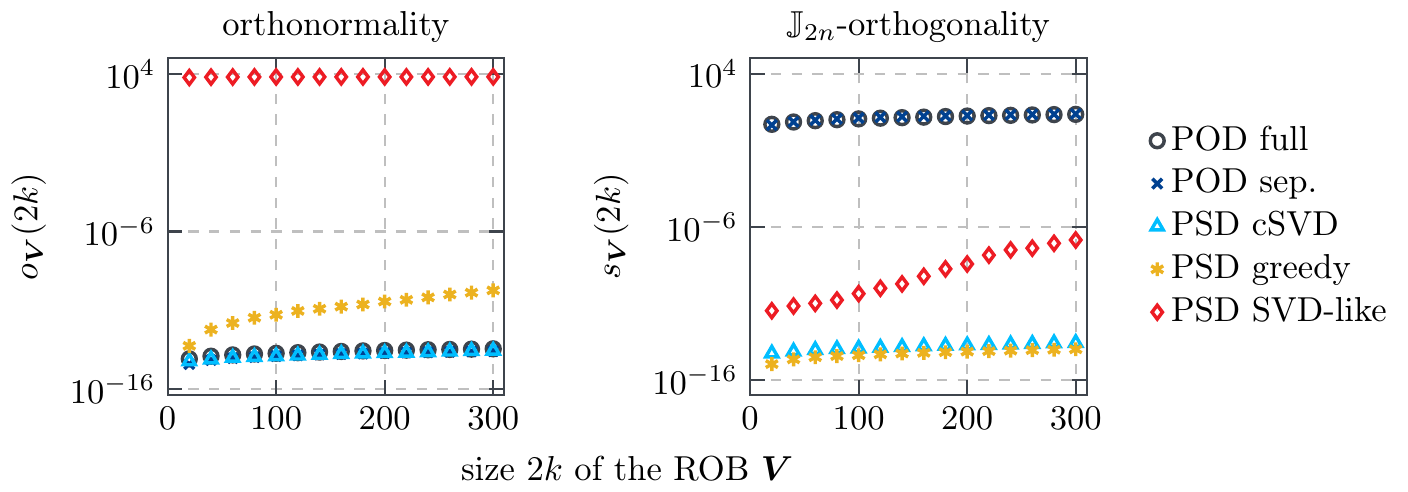}
\caption{The orthonormality (left) and the $\Jtn$-orthogonality (right) from \cref{eq:ortho}.}
\label{fig:NumRes:BeamAuto:Ex1:Ortho}
\end{figure}

\subsubsection{Relative error in the reduced model}
We investigate the error introduced by MOR in the reduced model. The error is measured in the relative $\infty$-norm $\infnorm{\bullet}$ in time and space
\begin{align}\label{eq:ErrMOR}
  \meanErr(2k, \fmu) :=
\frac{\displaystyle\max_{i\in\lcb 1,\dots,\nt \rcb}  \infnorm{\fx(t_i, \fmu) - \fV \fxr(t_i, \fmu)}}
{\displaystyle\max_{i\in\lcb 1,\dots,\nt \rcb} \infnorm{\fx(t_i, \fmu)}},
\end{align}
where $2k$ indicates the size of the ROB $\fV \in \R^{2n \times 2k}$ and $\fmu \in \paramSet$ is one of the test parameters. To display the results for all $16$ test parameters at once, we use box plots in \cref{fig:NumRes:BeamNonAuto:Ex4:RelErr}. The box represents the $25\%$-quartile, the median and the $75\%$-quartile. The whiskers indicate the range of data points which lay within $1.5$ times the interquartile range (IQR). The crosses show outliers. For the sake of a better overview, we truncated relative errors above $10^0 = 100\%$.

The experiments show that the non-symplectic MOR techniques show a strongly non-monotonic behaviour for an increasing basis size. For many of the basis sizes, there exists a parameter which shows crude approximation results which lay above $100\%$ relative error. The POD full state is unable to produce results with a relative error below $2\%$.

On the other hand, the symplectic MOR techniques show an exponentially decreasing relative error. Furthermore, the IQRs are much lower than for the non-symplectic methods. We stress that the logarithmic scale of the $y$ axis distorts the comparison of the IQRs -- but only in favour of the non-symplectic methods. The low IQRs for the symplectic methods show that the symplectic MOR techniques derive a reliable reduced model that yields good results for any of the $16$ randomly chosen test parameters. Furthermore, none of the systems shows an error above $0.19\%$ -- for PSD SVD-like decomposition this bound is $0.018\%$, i.e.\ one magnitude lower.

In the set of the considered symplectic, orthogonal MOR techniques, PSD greedy shows the best result for most of the considered ROB sizes. This superior behaviour of PSD greedy in comparison to PSD complex SVD is unexpected since PSD greedy showed inferior results for the projection error in \cref{subsubsec:NumExp:projErr}. This was also observed in \cite{Maboudi2017}.

Within the set of investigated symplectic MOR techniques, PSD SVD-like decomposition shows the best results followed by PSD greedy and PSD complex SVD. While the two orthonormal procedures show comparable results, PSD SVD like-decomposition shows an improvement in the relative error. Comparing the best result of either PSD greedy or PSD complex SVD with the worst result of PSD SVD-like decomposition considering the $16$ different test parameters for a fixed basis size -- which is pretty much in favour of the orthonormal basis generation techniques --, the improvement of PSD SVD-like decomposition ranges from factor $3.3$ to $11.3$ with a mean of $6.7$.

\begin{figure}[h]
\centering
\includegraphics[scale=1.0]{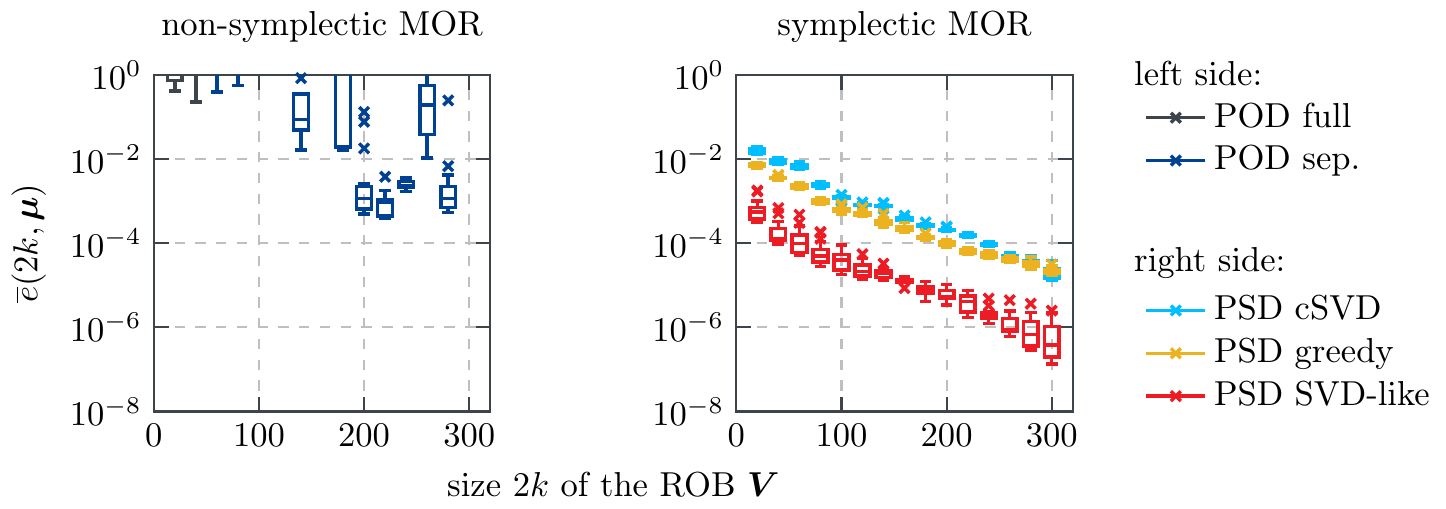}
\caption{Relative error in the reduced model.}
\label{fig:NumRes:BeamNonAuto:Ex4:RelErr}
\end{figure}

%
\section{Summary and conclusions}
\label{sec:Conclusion}

We gave an overview of autonomous and non-autonomous Hamiltonian systems and the structure-preserving model order reduction (MOR) techniques for these kinds of systems \cite{Peng2016,Maboudi2017,Maboudi2018}. Furthermore, we classified the techniques in orthonormal and non-orthonormal procedures based on the capability to compute a symplectic, (non-)orthonormal reduced order basis (ROB). To this end, we introduced a characterization of rectangular, symplectic matrices with orthonormal columns. Based thereon, an alternative formulation of the PSD Complex SVD \cite{Peng2016} was derived which we used to prove the optimality with respect to the PSD functional in the set of orthonormal, symplectic ROBs. As a new method, we presented a symplectic, non-orthonormal basis generation procedure that is based on an SVD-like decomposition \cite{Xu2003}. First theoretical results show that the quality of approximation can be linked to a quantity we referred to as weighted symplectic singular values.

The numerical examples show advantages for the considered linear elasticity model for symplectic MOR if a symplectic integrator is used. We were able to reduce the error introduced by the reduction with the newly introduced non-orthonormal method.

We conclude that non-orthonormal methods are able to derive bases with a lower error for both, the training and the test data. Yet, it is still unclear if the newly introduced method computes the global optimum of the PSD functional. Further work should investigate if a global optimum of the PSD functional can be computed with an SVD-like decomposition.\\

%
\vspace{6pt} 

\funding{This research was partly funded by the German Research Foundation (DFG) grant number HA5821/5-1 and within the GRK 2198/1.}

\acknowledgments{We thank the German Research Foundation (DFG) for funding this work. The authors thank Dominik Wittwar for inspiring discussions.}

\bibliographystyle{plain}
\bibliography{symplectic,port_Hamiltonian,mor,specific_bib}



\end{document}